\newtheorem{theorem}{\color{black}\indent Theorem}[section]
\newtheorem{lemma}{\color{black}\indent Lemma}[section]
\newtheorem{proposition}{\color{black}\indent Proposition}[section]
\newtheorem{definition}{\color{black}\indent Definition}[section]
\newtheorem{remark}{\color{black}\indent Remark}[section]
\begin{document}
\title{\LARGE\bf Threshold results for the existence of global and blow-up solutions to Kirchhoff equations with arbitrary initial energy}
\author{Yuzhu Han$^\dag$ \qquad Qingwei Li}
 \date{}
 \maketitle

 \footnotetext{\hspace{-1.9mm}$^\dag$Corresponding author.\\
  Email addresses: yzhan@jlu.edu.cn(Y. Han).

\thanks{
$^*$The project is supported by NSFC (11401252),
by Science and Technology Development Project of Jilin Province (20150201058NY,20160520103JH) and
by the project of The Education Department of Jilin Province (2015-463).}}

\begin{center}
{\it\small School of Mathematics, Jilin University,
 Changchun 130012, P.R. China}
\end{center}

\date{}
\maketitle

{\bf Abstract}\ In this paper we will apply the modified potential well method and variational method to the study of the long time behaviors of solutions to a class of parabolic equation of Kirchhoff type. Global existence and blow up in finite time of solutions will be obtained for arbitrary initial energy.
To be a little more precise, we will give a threshold result for the solutions to exist globally
or to blow up in finite time when the initial energy is subcritical and critical, respectively.
The decay rate of the $L^2(\Omega)$ norm is also obtained for global solutions in these cases. Moreover,
some sufficient conditions for the existence of global and blow-up solutions are also derived when the initial energy is supercritical.

{\bf Keywords} Kirchhoff; potential well method; arbitrary initial energy; global existence; blow up.

{\bf AMS Mathematics Subject Classification 2010:} 35K20; 35K57.

\section{Introduction}
\setcounter{equation}{0}

In the past decades, more and more attention has been devoted to the study of Kirchhoff type problems for their contributions to the modeling
of many physical and biological phenomena. These problems are closely related to the following hyperbolic equation
\begin{equation}\label{1.2}
\rho\frac{\partial^2u}{\partial t^2}-(\frac{P_0}{h}+\frac{E}{2L}\int_0^L|\frac{\partial u}{\partial x}|^2\mathrm{d}x)\frac{\partial^2u}{\partial x^2}=0,
\end{equation}
which was first presented by Kirchhoff \cite{kirchhoff} in 1883 to describe the transversal oscillations of a stretched string,
where the subsequent change in string length caused by oscillations was taken into account.
The parameters in \eqref{1.2} have the following physical interpretations:
\begin{eqnarray*}
&&L: \ \mbox{the length of the string}; \ \ \ \ \ \ \ \ \ \ \ \ \ \ \ \ \ \ \ \ \ \ \ \ \ \ \ \ \ \ \ \ \ \ \ \ \ \ \ \ \ \ \ \ \ \ \ \ \ \ \ \ \ \\
&&h: \ \mbox{the area of cross-section};\\
&&\rho: \ \mbox{the mass density};\\
&&P_0: \ \mbox{the initial tension};\\
&&E: \ \mbox{the Young modulus of the material}.
\end{eqnarray*}
It was mainly after the work of Lions \cite{lions1977}, where a method of functional analysis was proposed to deal with these kind of problems,
that the existence, uniqueness and regularities of solutions to Kirchhoff type equations were well studied by various authors.
Interested reader may refer to, for example, \cite{Shibata1994,Spagnolo1992,Nishihara1999} and the references therein for such results.

The following Kirchhoff type equation is an extension of the classical D'Alembert wave equation for free vibrations of elastic strings
(see \cite{Ghisi2008})
\begin{equation}\label{1.3}
\varepsilon u^{\varepsilon}_{tt}+u^{\varepsilon}_t-M(\int_{\Omega}|\nabla u^{\varepsilon}|^2\mathrm{d}x)\Delta u^{\varepsilon}=f(x,t).
\end{equation}
Formally, taking $\varepsilon=0$, \eqref{1.3} becomes a parabolic equation of Kirchhoff type
\begin{equation}\label{1.4}
u_t-M(\int_{\Omega}|\nabla u|^2\mathrm{d}x)\Delta u=f(x,t).
\end{equation}
Problem \eqref{1.4} can be used to describe the motion of a nonstationary
fluid or gas in a nonhomogeneous and anisotropic medium, and the nonlocal term $M$ appearing in \eqref{1.4} can describe a possible change in the global state of the fluid or gas caused by its motion in the considered medium.
The questions of existence, uniqueness and asymptotic behavior of solutions to \eqref{1.4} have been obtained by Chipot et.al \cite{Chipot2003}.
Nonlocal effects also find their application in biological systems.
When the diffusion coefficient $M$ in \eqref{1.4} depends on the integral of $u$ on the entire domain, i.e. on $\int_{\Omega}u(x,t)\mathrm{d}x$,
\eqref{1.4} can be used to describe the growth and movement of a particular species (for instance of bacteria), where
$u$ could describe the density of a population subject to spreading.

In this article, we are concerned with the following initial boundary value
problem for a class of Kirchhoff type parabolic equation with a nonlinear term
\begin{equation}\label{1.1}
\begin{cases}
u_t-M(\int_{\Omega}|\nabla u|^2\mathrm{d}x)\Delta u=|u|^{q-1}u, &(x,t)\in \Omega\times(0,T),\\
u=0, &(x,t)\in \partial \Omega\times(0,T),\\
u(x,0)=u_0(x), & x\in\Omega.
\end{cases}
\end{equation}
Here the diffusion coefficient $M(s)=a+bs$ with the parameters $a, b$ being positive so that $M$ is chosen in accordance with its original meaning,
$\Omega\subset\mathbb{R}^n(n\geq1)$ is a bounded domain with smooth boundary $\partial\Omega$, $3<q\leq2^*-1$, where $2^*$ is the Sobolev conjugate
of $2$, i.e. $2^*=+\infty$ for $n=1,2$ and $2^*=\dfrac{2n}{n-2}$ for $n\geq 3$. Moreover, $u_0\in H_0^1(\Omega)$.

By introducing a family of potential wells, we will show the invariance of some sets and give a threshold result for the
solutions to exist globally or to blow up in finite time when the initial energy is subcritical or critical.
The decay rate of the $L^2(\Omega)$ norm of solutions are obtained for these cases.
Moreover, by using variational methods, we also give some sufficient conditions for the existence of global and blow-up solutions
for supercritical initial energy.

It was D. H. Sattinger \cite{Sattinger1968} who first proposed the potential well method in 1968 when dealing with a class of nonlinear
hyperbolic initial boundary value problem
\begin{eqnarray}\label{1.5}
\begin{cases}
u_{tt}-\nabla^2u+f(x, u)=0, & (x, t)\in \ \Omega\times(0,T),\\
u=0, & (x, t)\in \ \partial \Omega\times(0,T),\\
u(x,0)=U(x), \ u_t(x,0)=V(x) & \ \ \ \ \ x \in \ \Omega.
\end{cases}
\end{eqnarray}
Instead of a dynamical system, it utilizes a functional  $J(u)$ in an appropriate Sobolev space.
Suppose that $J$ has a local minimum at $u=U(x)$.
A potential well is a region near the locally minimal potential energy. Solutions starting
inside the well are global in time, and the energy is nonincreasing in time.
Solutions starting outside the well and at an unstable point blow up in finite time.
Since then many authors \cite{Ikehata1996,Lions1969,YCLiu2003,YCLiu2006,Payne1975,Tsutsumi2003} studied the global existence and nonexistence of solutions of initial  boundary value problem for various nonlinear evolution equations by using potential well method, a typical one of which is Payne and Sattinger's work \cite{Payne1975}. Later, Liu and his cooperators\,\cite{YCLiu2003,YCLiu2006} generalized and improved Payne and Sattinger's results
by introducing a family of potential wells which include the known potential well as a special case. By using the improved method they not only gave a threshold result of global existence and nonexistence of solutions, but also obtained the vacuum isolating of solutions. Furthermore, they
proved the global existence of solutions with critical initial conditions.

However, to the best of our knowledge,
there have been few works concerned with global existence, blow-up and extinction for the nonlinear parabolic equations with the nonlocal
term $-(a+b\|\nabla u(x,t)\|_2^2)\triangle u$.
A difficulty arising from Problem \eqref{1.1} is the nonlinearity of the nonlocal term,
since one usually can not deduce from $u_n\rightharpoonup u$ in $H_0^1(\Omega)$ the convergence
$\|\nabla u_n\|_2\rightarrow\|\nabla u\|_2$.
Inspired by some ideas from \cite{YCLiu2006,Qu2016,RZXu2009,RZXu2013}, we combine the modified potential well method with the classical Galerkin's method and energy estimates to prove the existence of global weak solutions. Here some tricks arising from $S_+$ operator will be of great help in proving $\|\nabla u_n\|_2\rightarrow\|\nabla u\|_2$.
In addition, by applying the concavity arguments introduced by Levine \cite{Levine1973} together with the properties of potential wells, we obtain the result of blow-up in finite time of solutions for subcritical and critical initial energy.
Moreover, we also give some sufficient conditions for the existence of the global and blow-up solutions with supercritical initial energy,
and show that there exists $u_0$ such that the initial energy $J(u_0)$ is arbitrarily large,
while the corresponding solution $u(x,t)$ of Problem \eqref{1.1} with $u_0$ as initial datum blows up in finite time..

The rest of this paper is organized as follows. In Section 2,
we present some notations, definitions, functionals and sets as well as some lemmas concerning their basic properties.
Sections 3 and 4 will be devoted to the cases $J(u_0)<d$ and $J(u_0)=d$, respectively.
In Section 5, we give some sufficient conditions for the existence of global and blow-up solutions of \eqref{1.1}
when $J(u_0))>d$. Here $J(u)$ is the Lyapunov functional corresponding to \eqref{1.1} that will be introduced in Section 2.

\par
\section{Preliminaries}
\setcounter{equation}{0}

Throughout this paper, we denote by $\|\cdot\|_{2}$ the $L^2(\Omega)$ norm and $(\cdot, \cdot)$ the inner product in $L^2$.
We will equip $H_0^1(\Omega)$ with the norm $\|u\|_{H_0^1(\Omega)}=\|\nabla u\|_2$, which is equivalent to the standard one
due to Poincar\'{e}'s inequality.
In order to state our main results precisely, we first introduce some notations and definitions of some functionals and sets,
and then investigate their basic properties. For $u\in H_0^1(\Omega)$, set
\begin{eqnarray*}
&&J(u)=\frac{a}{2}\|\nabla u\|_2^2+\frac{b}{4}\|\nabla u\|_2^4-\frac{1}{q+1}\|u\|^{q+1}_{q+1},\\
&& \ I(u)=a\|\nabla u\|_2^2+b\|\nabla u\|_2^4-\|u\|^{q+1}_{q+1},
\end{eqnarray*}
and the Nehari manifold
\begin{eqnarray*}
&&\mathcal{N}=\{u\in H_0^1(\Omega)| \ I(u)=0, \ \|\nabla u\|_2\neq0\}. \ \ \ \ \ \ \
\end{eqnarray*}
The potential well and its corresponding set are defined respectively by
\begin{eqnarray*}
&&W=\{u\in H_0^1(\Omega)| \ I(u)>0, \ J(u)<d\}\cup\{0\},\\
&&V=\{u\in H_0^1(\Omega)| \ I(u)<0, \ J(u)<d\},
\end{eqnarray*}
where
\begin{eqnarray*}
&&d=\inf_{0\neq u\in H_0^1(\Omega)}\sup_{\lambda\geq0}J(\lambda u)=\inf_{u\in \mathcal{N}}J(u)
\end{eqnarray*}
is the depth of the potential well $W$.

\begin{lemma}\label{depth}
The depth $d$ of the potential well is positive.
\end{lemma}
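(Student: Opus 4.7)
The plan is to obtain a positive lower bound for $J(u)$ on the Nehari manifold $\mathcal{N}$, which immediately gives $d>0$. The strategy splits into two steps: first, derive a uniform positive lower bound on $\|\nabla u\|_{2}$ for every $u \in \mathcal{N}$; second, rewrite $J(u)$ on $\mathcal{N}$ in a manifestly positive form so that this lower bound propagates to a positive lower bound on $J$.

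For the first step, I would take $u \in \mathcal{N}$, so $I(u)=0$ means
\begin{equation*}
a\|\nabla u\|_{2}^{2} + b\|\nabla u\|_{2}^{4} = \|u\|_{q+1}^{q+1}.
\end{equation*}
Because $3 < q \le 2^*-1$, the Sobolev embedding $H_{0}^{1}(\Omega) \hookrightarrow L^{q+1}(\Omega)$ holds, yielding a constant $C_{*}>0$ with $\|u\|_{q+1} \le C_{*}\|\nabla u\|_{2}$. Plugging this in and dividing by $\|\nabla u\|_{2}^{2} \neq 0$ produces
\begin{equation*}
a + b\|\nabla u\|_{2}^{2} \le C_{*}^{\,q+1}\|\nabla u\|_{2}^{\,q-1},
\end{equation*}
and in particular $a \le C_{*}^{\,q+1}\|\nabla u\|_{2}^{\,q-1}$. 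Since $q>3>1$ the exponent is positive, giving a uniform bound $\|\nabla u\|_{2} \ge r_{0}:=(a/C_{*}^{\,q+1})^{1/(q-1)}>0$ for all $u\in\mathcal{N}$.

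For the second step, I would use $I(u)=0$ to eliminate the $\|u\|_{q+1}^{q+1}$ term inside $J(u)$, writing
\begin{equation*}
J(u) = J(u) - \frac{1}{q+1}I(u) = a\Bigl(\tfrac{1}{2}-\tfrac{1}{q+1}\Bigr)\|\nabla u\|_{2}^{2} + b\Bigl(\tfrac{1}{4}-\tfrac{1}{q+1}\Bigr)\|\nabla u\|_{2}^{4}.
\end{equation*}
The assumption $q>3$ makes both coefficients strictly positive, so combining with the lower bound from the first step yields
\begin{equation*}
J(u) \ge a\Bigl(\tfrac{1}{2}-\tfrac{1}{q+1}\Bigr)r_{0}^{2}>0,
\end{equation*}
and taking the infimum over $\mathcal{N}$ gives $d>0$.

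The only non-routine ingredient is the first step, where the key is recognizing that the two-term structure of $I$ (from the Kirchhoff coefficient $M(s)=a+bs$) still admits a clean a~priori bound because the Sobolev exponent $q+1$ exceeds $4$. The constraint $q>3$ is used twice: it guarantees the coefficient $\tfrac{1}{4}-\tfrac{1}{q+1}$ in the reformulated $J$ is positive (important later, though not strictly needed here), and more crucially it ensures $q-1>0$ so that $a \le C_{*}^{\,q+1}\|\nabla u\|_{2}^{\,q-1}$ forces $\|\nabla u\|_{2}$ away from zero rather than trivially. No compactness or minimization argument is needed at this stage, which keeps the proof short.
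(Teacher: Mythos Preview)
Your proof is correct and follows essentially the same route as the paper: both obtain the lower bound $\|\nabla u\|_{2}\ge (a/S^{q+1})^{1/(q-1)}$ on $\mathcal{N}$ via Sobolev embedding, then rewrite $J(u)$ on $\mathcal{N}$ as $\dfrac{a(q-1)}{2(q+1)}\|\nabla u\|_{2}^{2}+\dfrac{b(q-3)}{4(q+1)}\|\nabla u\|_{2}^{4}$ (your coefficients $\tfrac{1}{2}-\tfrac{1}{q+1}$ and $\tfrac{1}{4}-\tfrac{1}{q+1}$ are exactly these) and read off a positive lower bound. The only cosmetic difference is that the paper keeps both terms in the final estimate while you drop the $b$-term, which is immaterial.
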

\begin{proof}
Since $q+1\leq2^*$, we have for any $u\in \mathcal{N}$, that
$$a\|\nabla u\|_2^2+b\|\nabla u\|_2^4=\|u\|^{q+1}_{q+1}\leq S^{q+1}\|\nabla u\|_2^{q+1},$$
which implies $\|\nabla u\|_2\geq(\dfrac{a}{S^{q+1}})^{\frac{1}{q-1}}$. Here $S>0$ is the optimal
embedding constant from $H_0^1(\Omega)$ to $L^{q+1}(\Omega)$. By noticing that $q>3$, we have
\begin{eqnarray*}
J(u)&=&\frac{a}{2}\|\nabla u\|_2^2+\frac{b}{4}\|\nabla u\|_2^4-\frac{1}{q+1}\Big(a\|\nabla u\|_2^2+b\|\nabla u\|_2^4\Big)\\
&=&\dfrac{a(q-1)}{2(q+1)}\|\nabla u\|_2^2+\dfrac{b(q-3)}{4(q+1)}\|\nabla u\|_2^4\\
&\geq&\dfrac{a(q-1)}{2(q+1)}(\dfrac{a}{S^{q+1}})^{\frac{2}{q-1}}+\dfrac{b(q-3)}{4(q+1)}(\dfrac{a}{S^{q+1}})^{\frac{4}{q-1}}.
\end{eqnarray*}
Therefore, $d\geq\dfrac{a(q-1)}{2(q+1)}(\dfrac{a}{S^{q+1}})^{\frac{2}{q-1}}+\dfrac{b(q-3)}{4(q+1)}(\dfrac{a}{S^{q+1}})^{\frac{4}{q-1}}>0$.
The proof is complete.
\end{proof}

Now for $\delta>0$, we define some modified functionals and sets as follows:
\begin{eqnarray*}
&& I_{\delta}(u)=\delta(a+b\|\nabla u\|_2^2)\|\nabla u\|_2^2-\|u\|^{q+1}_{q+1},\\
&& \mathcal{N}_{\delta}=\{u\in H_0^1(\Omega)| \ I_{\delta}(u)=0, \ \|\nabla u\|_2\neq0\}.
\end{eqnarray*}
The modified potential wells and their corresponding sets are defined respectively by
\begin{eqnarray*}
&& W_{\delta}=\{u\in H_0^1(\Omega)| \ I_{\delta}(u)>0, \ J(u)<d(\delta)\}\cup\{0\},\\
&& V_{\delta}=\{u\in H_0^1(\Omega)| \ I_{\delta}(u)<0, \ J(u)<d(\delta)\}.
\end{eqnarray*}
Here $d(\delta)=\inf\limits_{u\in \mathcal{N}_{\delta}}J(u)$ is the potential depth of $W_\delta$, which is also positive.

Before investigating the properties of the functionals and sets given above in detail, we present the definition
of weak solutions to Problem \eqref{1.1}.
\begin{definition}\label{de2.1}$\mathrm{\bf{(Weak \ solution)}}$
A function $u=u(x,t)\in L^{\infty}(0, T; H_0^1(\Omega))$ with $u_t\in L^2(0, T; L^2(\Omega))$ is called a weak solution of Problem \eqref{1.1} on $\Omega\times[0,T)$, if
$u(x, 0)=u_0\in H_0^1(\Omega)$ and satisfies
\begin{equation}\label{2.1}
(u_t, \phi)+\Big((a+b\int_{\Omega}|\nabla u|^2\mathrm{d}x)\nabla u, \nabla\phi\Big)=(|u|^{q-1}u, \phi), \ \ \ \ a.\ e. \ t\in(0, T),\\
\end{equation}
for any $\phi\in H_0^1(\Omega)$. Moreover, $u(x,t)$ satisfies
\begin{equation}\label{2.2}
\int_0^t\|u_{\tau}\|_2^2\mathrm{d}\tau+J(u)=J(u_0), \qquad \ a.\ e. \ t\in(0, T).
\end{equation}
\end{definition}

The following lemmas show some basic properties of the functionals and sets defined above,
and will play an important role in the proof of our main results.
Since the proofs are more or less different from the
semi-linear case in one place or another, we also sketch their outlines for the convenience of the readers.

\begin{lemma}\label{le2.1} Let $3<q\leq2^*-1$. Then for any $u\in H_0^1(\Omega)$, $\|\nabla u\|_2\neq0$, we have
\begin{eqnarray*}
\mathrm{(i)} \ \lim_{\lambda\rightarrow0^+}J(\lambda u)=0, \ \lim_{\lambda\rightarrow+\infty}J(\lambda u)=-\infty. \ \ \ \ \ \ \ \ \ \ \ \ \ \ \ \ \ \ \ \ \ \ \ \ \ \ \ \ \ \ \ \ \ \ \ \ \ \ \ \ \ \ \ \ \ \ \ \ \ \
\end{eqnarray*}

$\mathrm{(ii)}$ \ there exists a unique $\lambda^*=\lambda^*(u)>0$ such that $\frac{d}{d\lambda}J(\lambda u)|_{\lambda=\lambda^*}=0$.
$J(\lambda u)$ is increasing on
$0<\lambda\leq\lambda^*$, decreasing on $\lambda^*\leq\lambda<+\infty$ and takes its maximum at $\lambda=\lambda^*$.

$\mathrm{(iii)}$ $I(\lambda u)>0$ on $0<\lambda<\lambda^*$, $I(\lambda u)<0$ on $\lambda^*<\lambda<+\infty$ and $I(\lambda^* u)=0$.
\end{lemma}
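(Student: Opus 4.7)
The plan is to parametrize everything by $\lambda$, reduce each claim to a one-variable calculus exercise, and exploit the hypothesis $q>3$ at the crucial step. For $u\in H_0^1(\Omega)$ with $\|\nabla u\|_2\neq 0$, write out
$$J(\lambda u)=\frac{a}{2}\lambda^{2}\|\nabla u\|_2^{2}+\frac{b}{4}\lambda^{4}\|\nabla u\|_2^{4}-\frac{\lambda^{q+1}}{q+1}\|u\|_{q+1}^{q+1},$$
and note the scaling identity
$$I(\lambda u)=a\lambda^{2}\|\nabla u\|_2^{2}+b\lambda^{4}\|\nabla u\|_2^{4}-\lambda^{q+1}\|u\|_{q+1}^{q+1}=\lambda\,\frac{d}{d\lambda}J(\lambda u),$$
which ties (iii) directly to (ii).

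For (i), the two limits are read off immediately: as $\lambda\to 0^{+}$ every term in $J(\lambda u)$ vanishes, and as $\lambda\to+\infty$ the term $-\lambda^{q+1}\|u\|_{q+1}^{q+1}/(q+1)$ dominates because $q+1>4$, driving $J(\lambda u)\to-\infty$.

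For (ii), factor the derivative as
$$\frac{d}{d\lambda}J(\lambda u)=\lambda\,h(\lambda),\qquad h(\lambda):=a\|\nabla u\|_2^{2}+b\lambda^{2}\|\nabla u\|_2^{4}-\lambda^{q-1}\|u\|_{q+1}^{q+1}.$$
Here $h(0)=a\|\nabla u\|_2^{2}>0$ and $h(\lambda)\to-\infty$ as $\lambda\to+\infty$ since $q-1>2$. To get uniqueness, differentiate once more:
$$h'(\lambda)=\lambda\bigl(2b\|\nabla u\|_2^{4}-(q-1)\lambda^{q-3}\|u\|_{q+1}^{q+1}\bigr).$$
The hypothesis $q>3$ makes $\lambda^{q-3}$ strictly increasing on $(0,\infty)$, so the bracket is strictly decreasing in $\lambda$, vanishes at exactly one point $\lambda_{0}>0$, and flips sign from $+$ to $-$ there. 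Consequently $h$ is strictly increasing on $(0,\lambda_{0})$ and strictly decreasing on $(\lambda_{0},\infty)$; combined with $h(0)>0$ and $h(+\infty)=-\infty$, this forces $h$ to have exactly one zero $\lambda^{*}>\lambda_{0}$ with $h>0$ on $(0,\lambda^{*})$ and $h<0$ on $(\lambda^{*},\infty)$. Multiplying by $\lambda>0$ yields the monotonicity/maximum statements for $J(\lambda u)$.

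For (iii), the scaling identity $I(\lambda u)=\lambda\,\frac{d}{d\lambda}J(\lambda u)$ transfers the sign information for $\frac{d}{d\lambda}J(\lambda u)$ established in (ii) directly to $I(\lambda u)$, giving $I(\lambda u)>0$ on $(0,\lambda^{*})$, $I(\lambda u)<0$ on $(\lambda^{*},+\infty)$, and $I(\lambda^{*}u)=0$. The only nontrivial point in the whole argument is the uniqueness of $\lambda^{*}$, and this is precisely where the assumption $q>3$ (rather than merely $q>1$ as in semilinear problems) is essential, because the Kirchhoff term $b\lambda^{4}\|\nabla u\|_2^{4}$ already scales like $\lambda^{4}$ and must be dominated by $\lambda^{q+1}$ for the standard potential-well monotonicity to survive.
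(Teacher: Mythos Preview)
Your argument is correct and follows the same overall scheme as the paper: write out $J(\lambda u)$, compute the derivative, factor it as $\lambda$ times an auxiliary function, analyze the sign of that auxiliary function using $q>3$, and then transfer everything to $I(\lambda u)$ via the identity $I(\lambda u)=\lambda\,\frac{d}{d\lambda}J(\lambda u)$. The one noteworthy difference is the choice of factorization. You pull out $\lambda$ and work with $h(\lambda)=a\|\nabla u\|_2^{2}+b\lambda^{2}\|\nabla u\|_2^{4}-\lambda^{q-1}\|u\|_{q+1}^{q+1}$, which is not monotone; you therefore need the extra step of showing $h$ is unimodal (via $h'$) before concluding it has a unique zero. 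The paper instead factors out $\lambda^{q}$ and works with $\tilde h(\lambda)=a\lambda^{1-q}\|\nabla u\|_2^{2}+b\lambda^{3-q}\|\nabla u\|_2^{4}-\|u\|_{q+1}^{q+1}$, whose derivative is manifestly negative on $(0,\infty)$ because both exponents $1-q$ and $3-q$ are negative when $q>3$; uniqueness of the zero then follows in one line from $\tilde h(0^{+})=+\infty$ and $\tilde h(+\infty)=-\|u\|_{q+1}^{q+1}<0$. Both routes are valid and hinge on $q>3$ in the same essential way; the paper's factorization simply buys a shorter monotonicity argument.
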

\begin{proof}
$\mathrm{(i)}$ From the definition of $J(u)$ we see, for any $\lambda>0$, that
\begin{eqnarray*}
J(\lambda u)=\frac{a\lambda^2}{2}\|\nabla u\|_2^2+\frac{b\lambda^4}{4}\|\nabla u\|_2^4-\frac{\lambda^{q+1}}{q+1}\|u\|_{q+1}^{q+1}.
\end{eqnarray*}
Since $q>3$, it is easy to obtain the results of $\mathrm{(i)}$.

\ \ \ \ $\mathrm{(ii)}$ For any $\lambda>0$, an easy computation shows that
\begin{eqnarray}\label{2.3}
\frac{d}{d\lambda}J(\lambda u)&=&a\lambda\|\nabla u\|_2^2+b\lambda^3\|\nabla u\|_2^4-\lambda^q\|u\|_{q+1}^{q+1}\nonumber\\
&=&\lambda^{q}(a\lambda^{1-q}\|\nabla u\|_2^2+b\lambda^{3-q}\|\nabla u\|_2^4-\|u\|_{q+1}^{q+1}).
\end{eqnarray}
Let
\begin{eqnarray*}
h(\lambda)=a\lambda^{1-q}\|\nabla u\|_2^2+b\lambda^{3-q}\|\nabla u\|_2^4-\|u\|_{q+1}^{q+1}.
\end{eqnarray*}
Recalling the assumption $q>3$ again, we deduce that
\begin{eqnarray}\label{2.4}
h'(\lambda)=a(1-q)\lambda^{-q}\|\nabla u\|_2^2+b(3-q)\lambda^{2-q}\|\nabla u\|_2^4<0,
\end{eqnarray}
and
\begin{eqnarray}\label{2.5}
\lim_{\lambda\rightarrow0^+}h(\lambda)=+\infty, \ \ \lim_{\lambda\rightarrow+\infty}h(\lambda)=-\|u\|_{q+1}^{q+1}<0.
\end{eqnarray}
Therefore, from \eqref{2.4} and \eqref{2.5} it is known that there exists a unique $\lambda^*=\lambda^*(u)>0$ such that $h(\lambda^*)=0$.
Moreover, it follows from \eqref{2.3} that
$\frac{d}{d\lambda}J(\lambda u)|_{\lambda=\lambda^*}=\lambda^{*q}h(\lambda^*)=0$. Since $h(\lambda)>0$ on $(0, \lambda^*)$ and
$h(\lambda)<0$ on $(\lambda^*, +\infty)$, we get that $J(\lambda u)$ is increasing on
$0<\lambda\leq\lambda^*$, decreasing on $\lambda^*\leq\lambda<+\infty$ and takes its maximum at $\lambda=\lambda^*$.

\ \ \ \ $\mathrm{(iii)}$ For any $\lambda>0$, we have
\begin{equation*}
I(\lambda u)=a\|\nabla(\lambda u)\|_2^2+b\|\nabla(\lambda u)\|_2^4-\|\lambda u\|_{q+1}^{q+1}
=\lambda\frac{d}{d\lambda}J(\lambda u).
\end{equation*}
Then the results of $\mathrm{(iii)}$ follow from $\mathrm{(ii)}$ and the above equality. The proof is complete.
\end{proof}

\begin{lemma}\label{le2.2}
Let $3<q\leq2^*-1$, $u\in H_0^1(\Omega)$ and $r(\delta)=\Big(\dfrac{\delta b}{S^{q+1}}\Big)^{\frac{1}{q-3}}$($S$ is the constant given in Lemma \ref{depth}). We have

 \ \ \ \ \ $\mathrm{(i)}$ \ If $0\leq\|\nabla u\|_2\leq r(\delta)$, then $I_{\delta}(u)\geq0.$

 \ \ \ \ \ $\mathrm{(ii)}$ If $I_{\delta}(u)<0$, then $\|\nabla u\|_2>r(\delta).$

 \ \ \ \ $\mathrm{(iii)}$ If $I_{\delta}(u)=0$, then $\|\nabla u\|_2=0$ or $\|\nabla u\|_2\geq r(\delta)$.
\end{lemma}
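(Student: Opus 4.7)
The plan is to treat all three parts with the same basic ingredient: the Sobolev embedding $\|u\|_{q+1}\leq S\|\nabla u\|_2$ (valid since $q+1\leq 2^*$), and then to extract whichever power of $\|\nabla u\|_2$ is useful by splitting $\|\nabla u\|_2^{q+1}=\|\nabla u\|_2^{q-3}\cdot\|\nabla u\|_2^4$. Because the parameter $\delta$ enters only through the coefficient of the quartic term in $I_\delta$, it is this "quartic versus $(q+1)$-power" comparison, rather than the quadratic term, that controls the sign of $I_\delta$.

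For (i), if $\|\nabla u\|_2=0$ then $u\equiv 0$ and $I_\delta(u)=0$. Otherwise, using $\|\nabla u\|_2\leq r(\delta)$ and the definition of $r(\delta)$, I would write
\begin{equation*}
\|u\|_{q+1}^{q+1}\leq S^{q+1}\|\nabla u\|_2^{q+1}=S^{q+1}\|\nabla u\|_2^{q-3}\,\|\nabla u\|_2^{4}\leq S^{q+1}r(\delta)^{q-3}\|\nabla u\|_2^{4}=\delta b\|\nabla u\|_2^{4}.
\end{equation*}
Plugging into the definition of $I_\delta$ leaves the nonnegative remainder $\delta a\|\nabla u\|_2^2+(\delta b\|\nabla u\|_2^4-\|u\|_{q+1}^{q+1})\geq 0$.

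Part (ii) is then immediate as the contrapositive of (i): $\|\nabla u\|_2\leq r(\delta)$ would force $I_\delta(u)\geq 0$, contradicting $I_\delta(u)<0$. For (iii), assume $\|\nabla u\|_2\neq 0$; dropping the nonnegative $\delta a\|\nabla u\|_2^2$ term in $I_\delta(u)=0$ and applying Sobolev gives
\begin{equation*}
\delta b\|\nabla u\|_2^{4}\leq\delta(a+b\|\nabla u\|_2^{2})\|\nabla u\|_2^{2}=\|u\|_{q+1}^{q+1}\leq S^{q+1}\|\nabla u\|_2^{q+1},
\end{equation*}
and dividing by $\|\nabla u\|_2^{4}$ yields $\|\nabla u\|_2^{q-3}\geq \delta b/S^{q+1}$, i.e.\ $\|\nabla u\|_2\geq r(\delta)$.

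There is no real obstacle here; the only thing to be careful about is the book-keeping in the Sobolev estimate (splitting the exponent as $q+1=(q-3)+4$ so that the threshold $r(\delta)$ emerges cleanly), and the trivial case $\|\nabla u\|_2=0$ in (i) and (iii), where the definition of $I_\delta$ directly returns $0$.
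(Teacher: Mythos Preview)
Your proof is correct and follows essentially the same approach as the paper: both arguments rest on the Sobolev embedding $\|u\|_{q+1}\leq S\|\nabla u\|_2$ together with the splitting $\|\nabla u\|_2^{q+1}=\|\nabla u\|_2^{q-3}\|\nabla u\|_2^{4}$, which is exactly what makes the threshold $r(\delta)$ appear. The only cosmetic difference is in part (ii), where you invoke the contrapositive of (i) while the paper argues directly from $I_\delta(u)<0$; the underlying inequality chain is identical.
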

\begin{proof}
$\mathrm{(i)}$ Since $3<q\leq2^*-1$, from $0\leq\|\nabla u\|_2\leq r(\delta)$ and Sobolev's inequality we obtain
\begin{eqnarray*}
\|u\|^{q+1}_{q+1}\leq S^{q+1}\|\nabla u\|_2^{q+1}= S^{q+1}\|\nabla u\|_2^{q-3}\|\nabla u\|_2^4\leq\delta b\|\nabla u\|_2^4+\delta a\|\nabla u\|_2^2.
\end{eqnarray*}
By the definition of $I_{\delta}(u)$ we see $I_{\delta}(u)\geq0$.

 \ \ \ \ $\mathrm{(ii)}$ \ From $I_{\delta}(u)<0$ and the Sobolev's inequality, we have
\begin{eqnarray*}
\delta b\|\nabla u\|_2^4<\|u\|^{q+1}_{q+1}-\delta a\|\nabla u\|_2^2\leq\|u\|^{q+1}_{q+1}\leq S^{q+1}\|\nabla u\|_2^{q+1},
\end{eqnarray*}
which in turn implies that $\|\nabla u\|_2>\Big(\dfrac{\delta b}{S^{q+1}}\Big)^{\frac{1}{q-3}}=r(\delta)$.

 \ \ \ \ $\mathrm{(iii)}$ \ If $\|\nabla u\|_2=0$, we have $I_{\delta}(u)=0$. If $I_{\delta}(u)=0$ and $\|\nabla u\|_2\neq0$, then by
 the Sobolev's inequality
 \begin{eqnarray*}
\delta b\|\nabla u\|_2^4=\|u\|^{q+1}_{q+1}-\delta a\|\nabla u\|_2^2\leq\|u\|^{q+1}_{q+1}\leq S^{q+1}\|\nabla u\|_2^{q+1},
\end{eqnarray*}
we get $\|\nabla u\|_2\geq\Big(\dfrac{\delta b}{S^{q+1}}\Big)^{\frac{1}{q-3}}=r(\delta)$. The proof is complete.
\end{proof}

\begin{lemma}\label{le2.3}
The function $d(\delta)$ satisfies the following properties:

 \ \ $\mathrm{(i)}$ \ $\lim\limits_{\delta\rightarrow0^+}d(\delta)=0$, $\lim\limits_{\delta\rightarrow+\infty}d(\delta)=-\infty.$

 \ \ $\mathrm{(ii)}$ \ $d(\delta)$ is increasing on $0<\delta\leq1$, decreasing on $\delta\geq1$, and takes its maximum $d=d(1)$

 \ \ \ \ \ \ \ at $\delta=1$.
\end{lemma}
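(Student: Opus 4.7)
My plan is to parametrize the Nehari manifold $\mathcal{N}_\delta$ by rays, generalising Lemma~\ref{le2.1}. For each $u\in H_0^1(\Omega)\setminus\{0\}$ and each $\delta>0$, dividing $I_\delta(\lambda u)=0$ by $\lambda^2$ reduces it to
\begin{equation*}
h(\lambda):=\delta a\|\nabla u\|_2^2+\delta b\lambda^2\|\nabla u\|_2^4-\lambda^{q-1}\|u\|_{q+1}^{q+1}=0.
\end{equation*}
Since $q>3$, $h$ is strictly positive at $\lambda=0$, admits a unique interior critical point (a maximum), and tends to $-\infty$ as $\lambda\to+\infty$; hence a unique positive root $\lambda(\delta,u)$ exists. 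Consequently $d(\delta)=\inf_{u\neq 0}F(\delta,u)$ where $F(\delta,u):=J(\lambda(\delta,u)u)$, and Lemma~\ref{le2.1} identifies $\lambda(1,u)=\lambda^*(u)$ with $F(1,u)=\sup_{\lambda>0}J(\lambda u)$. Solving the defining identity for $\delta$ as a function of $\lambda$ and differentiating, a routine cancellation shows $d\delta/d\lambda$ equals a positive multiple of $(q-1)a\|\nabla u\|_2^2+(q-3)b\lambda^2\|\nabla u\|_2^4$, which is positive thanks to $q>3$. Thus $\delta\mapsto\lambda(\delta,u)$ is strictly increasing for every fixed $u\neq 0$.

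For item (i), substituting $\|u\|_{q+1}^{q+1}=\delta(a\|\nabla u\|_2^2+b\|\nabla u\|_2^4)$ (which holds on $\mathcal{N}_\delta$) into $J(u)$ yields
\begin{equation*}
J(u)=\frac{a(q+1-2\delta)}{2(q+1)}\|\nabla u\|_2^2+\frac{b(q+1-4\delta)}{4(q+1)}\|\nabla u\|_2^4.
\end{equation*}
Both coefficients are positive whenever $\delta\leq 1$ (since $q>3$ gives $(q+1)/4>1$), so $d(\delta)\geq 0$. For the matching upper bound, fix any $u_0\neq 0$: the defining equation forces $\lambda(\delta,u_0)\to 0$ as $\delta\to 0^+$, hence $F(\delta,u_0)\to 0$ and $d(\delta)\leq F(\delta,u_0)\to 0$. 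For $\delta\to+\infty$, the same equation forces $\lambda(\delta,u_0)\to+\infty$; since $q+1>4$ the $-\lambda^{q+1}\|u_0\|_{q+1}^{q+1}/(q+1)$ term dominates in $J(\lambda u_0)$, giving $F(\delta,u_0)\to-\infty$ and therefore $d(\delta)\to-\infty$.

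For item (ii), using the chain rule together with $\frac{d}{d\lambda}J(\lambda u)=I(\lambda u)/\lambda$ (observed in the proof of Lemma~\ref{le2.1}),
\begin{equation*}
\frac{\partial F}{\partial\delta}(\delta,u)=\frac{I\bigl(\lambda(\delta,u)u\bigr)}{\lambda(\delta,u)}\cdot\frac{\partial\lambda}{\partial\delta}.
\end{equation*}
Since $\partial\lambda/\partial\delta>0$, and by Lemma~\ref{le2.1}(iii) we have $I(\lambda u)>0$ for $\lambda<\lambda^*(u)$ and $I(\lambda u)<0$ for $\lambda>\lambda^*(u)$, together with the equivalence $\lambda(\delta,u)\lessgtr\lambda^*(u)=\lambda(1,u)$ iff $\delta\lessgtr 1$, we conclude $F(\cdot,u)$ is strictly increasing on $(0,1]$ and strictly decreasing on $[1,\infty)$, with maximum $F(1,u)$. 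Taking the infimum over $u\neq 0$ preserves monotonicity, so $d(\delta)$ is non-decreasing on $(0,1]$, non-increasing on $[1,\infty)$, with $d(1)=\inf_{u\neq 0}\sup_{\lambda}J(\lambda u)=d$. The principal obstacle is upgrading this pointwise strict monotonicity of $F(\cdot,u)$ to strict monotonicity of the infimum $d(\delta)$, since the gap $F(\delta_2,u)-F(\delta_1,u)$ could in principle shrink along a minimising sequence; this is closed by extracting uniform upper and lower bounds on $\|\nabla u\|_2$ along such sequences from the identity displayed above together with the Nehari-type bound $\|\nabla u\|_2\geq r(\delta)$ of Lemma~\ref{le2.2}, forcing the $\delta$-derivative of $F$ to remain uniformly bounded away from zero.
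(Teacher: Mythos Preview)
Your argument follows essentially the same route as the paper: both parametrise $\mathcal{N}_\delta$ along rays via the unique $\lambda(\delta,u)$, establish that $\delta\mapsto\lambda(\delta,u)$ is strictly increasing, and deduce the limits in (i) and the monotonicity in (ii) from the sign of $I(\lambda u)=(1-\delta)\bigl(a\lambda^2\|\nabla u\|_2^2+b\lambda^4\|\nabla u\|_2^4\bigr)$ on $\mathcal{N}_\delta$. You are in fact more explicit than the paper about the passage from pointwise strict monotonicity of $F(\cdot,u)$ to strict monotonicity of the infimum $d(\delta)$---the paper presents its gap constant $\varepsilon(\delta',\delta'')$ as if uniform in $u$ while it actually involves $\lambda(\delta')=\lambda(\delta',u)$---so your flagging of this step and the sketch of its resolution via the lower bound $\|\nabla u\|_2\geq r(\delta)$ from Lemma~\ref{le2.2} is a genuine refinement.
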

\begin{proof}
$\mathrm{(i)}$ For any $u\in H_0^1(\Omega)$, $\|\nabla u\|_2\neq0$, and for any $\delta>0$, there exists a unique $\lambda=\lambda(\delta)>0$
such that $I_{\delta}(\lambda u)=0$. That is,
\begin{equation}\label{2.6}
\delta (a+b\|\nabla(\lambda u)\|_2^2)\|\nabla(\lambda u)\|_2^2-\|\lambda u\|_q^q=0.
\end{equation}
From \eqref{2.6}, we get
\begin{eqnarray*}
\delta=\frac{\lambda^{q-1}\|u\|^{q+1}_{q+1}}{a\|\nabla u\|_2^2+b\lambda^2\|\nabla u\|_2^4}.
\end{eqnarray*}
It is easily checked from the above expression that $\delta$ is increasing with respect to $\lambda$ on $(0,+\infty)$, which implies that the inverse function
$\lambda(\delta)$ is also increasing on $\delta>0$. Furthermore, we can deduce from \eqref{2.6} that $\lim\limits_{\delta\rightarrow0}\lambda(\delta)=0$ and ${\lim\limits_{\delta\rightarrow+\infty}}
\lambda(\delta)=+\infty$. Since $\lambda u\in \mathcal{N}_{\delta}$, $d(\delta)\leq J(\lambda u)$. It follows that
\begin{eqnarray*}
0\leq\lim\limits_{\delta\rightarrow0}d(\delta)\leq\lim\limits_{\delta\rightarrow0}J(\lambda u)=\lim\limits_{\lambda\rightarrow0}J(\lambda u)=0,
\end{eqnarray*}
i.e. $\lim\limits_{\delta\rightarrow0}d(\delta)=0$. On the other hand,
\begin{eqnarray*}
\lim\limits_{\delta\rightarrow+\infty}d(\delta)\leq\lim\limits_{\delta\rightarrow+\infty}J(\lambda u)=\lim\limits_{\lambda\rightarrow+\infty}J(\lambda u)=-\infty,
\end{eqnarray*}
that is $\lim\limits_{\delta\rightarrow+\infty}d(\delta)=-\infty.$

 \ \ \ \ \ \ $\mathrm{(ii)}$ Clearly, we only need to prove that for any $0<\delta'<\delta''<1$ or $\delta'>\delta''>1$ and any $u\in \mathcal{N}_{\delta''}$
 there exist a $v\in \mathcal{N}_{\delta'}$ and a constant $\varepsilon(\delta', \delta'')>0$ such that $J(u)-J(v)>\varepsilon(\delta', \delta'')$.
In fact, for any $u\in \mathcal{N}_{\delta''}$, we have $\lambda(\delta'')=1$ and $\|\nabla u\|_2\geq r(\delta'')$. Take $v=\lambda(\delta')u$, then $v\in \mathcal{N}_{\delta'}$. Let $g(\lambda)=
J(\lambda(\delta)u)$, then
\begin{eqnarray*}
\frac{d}{d\lambda}g(\lambda)&=&\frac{1}{\lambda}[a(1-\delta)\|\nabla(\lambda u)\|_2^2+b(1-\delta)\|\nabla(\lambda u)\|_2^4+I_{\delta}(\lambda u)]\\
&=&a(1-\delta)\lambda\|\nabla u\|_2^2+b(1-\delta)\lambda^{3}\|\nabla u\|_2^4.
\end{eqnarray*}
If $0<\delta'<\delta''<1$, since $\lambda(\delta)$ is increasing in $\delta$ and $\lambda(\delta'')=1$, then
\begin{eqnarray*}
J(u)-J(v)&=&g(1)-g(\lambda(\delta'))=\int^1_{\lambda(\delta')}\frac{d}{d\lambda}g(\lambda)\mathrm{d}\lambda\\
&=&\int^1_{\lambda(\delta')}[a(1-\delta)\lambda\|\nabla u\|_2^2+(1-\delta)\lambda^{3}\|\nabla u\|_2^4]\mathrm{d}\lambda\\
&\geq&[a\lambda(\delta')(1-\delta'')r^2(\delta'')+b\lambda^3(\delta')(1-\delta'')r^4(\delta'')](1-\lambda(\delta'))\\
&=&\varepsilon(\delta', \delta'')>0.
\end{eqnarray*}
If $\delta'>\delta''>1$, then
\begin{eqnarray*}
J(u)-J(v)&=&g(1)-g(\lambda(\delta'))=\int^1_{\lambda(\delta')}\frac{d}{d\lambda}g(\lambda)\mathrm{d}\lambda\\
&=&\int_1^{\lambda(\delta')}[a(\delta-1)\lambda\|\nabla u\|_2^2+b(\delta-1)\lambda^{3}\|\nabla u\|_2^4]\mathrm{d}\lambda\\
&\geq&[a\lambda(\delta'')(\delta''-1)r^2(\delta'')+b\lambda^3(\delta'')(\delta''-1)r^4(\delta'')](\lambda(\delta')-1)\\
&=&\varepsilon(\delta', \delta'')>0.
\end{eqnarray*}
Furthermore, since $d(\delta)$ is continuous with respect to $\delta$ and from the results obtained in $\mathrm{(i)}$,
we see that there exists a $\widetilde{\delta}>1$ such that $d(\widetilde{\delta})=0$. The proof is complete.
\end{proof}

\begin{lemma}\label{le2.4}
Assume $u\in H_0^1(\Omega)$, $0<J(u)<d$, and $\delta_1<1<\delta_2$ are the two roots of the equation $d(\delta)=J(u)$. Then the sign of $I_{\delta}(u)$
does not change for $\delta_1<\delta<\delta_2$.
\end{lemma}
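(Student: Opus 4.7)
The plan is to argue by contradiction, exploiting the fact that $\delta \mapsto I_\delta(u)$ is affine in $\delta$ (so continuous) while $d(\delta)$ attains its maximum at $\delta=1$ and strictly exceeds $J(u)$ on the whole open interval $(\delta_1,\delta_2)$.

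First I would set up the geometry coming from Lemma \ref{le2.3}. That lemma, together with $\lim_{\delta\to 0^+}d(\delta)=0$ and $\lim_{\delta\to+\infty}d(\delta)=-\infty$, guarantees the existence of exactly one root $\delta_1\in(0,1)$ on the increasing branch and exactly one root $\delta_2\in(1,\widetilde\delta)$ on the decreasing branch (where $d(\widetilde\delta)=0$); moreover, strict monotonicity on each branch gives
\begin{equation*}
d(\delta)>J(u)\quad\text{for all }\delta\in(\delta_1,\delta_2).
\end{equation*}
I would record this as the key inequality driving the contradiction.

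Next, suppose for contradiction that $I_\delta(u)$ changes sign on $(\delta_1,\delta_2)$. Since the map $\delta\mapsto I_\delta(u)=\delta(a+b\|\nabla u\|_2^2)\|\nabla u\|_2^2-\|u\|_{q+1}^{q+1}$ is continuous (indeed linear) in $\delta$, the intermediate value theorem yields some $\bar\delta\in(\delta_1,\delta_2)$ with $I_{\bar\delta}(u)=0$. At this point I need to rule out the degenerate case $\|\nabla u\|_2=0$: if this held, then $u\equiv 0$ in $H_0^1(\Omega)$ and $J(u)=0$, contradicting the hypothesis $J(u)>0$. Hence $\|\nabla u\|_2\neq 0$, so $u\in\mathcal{N}_{\bar\delta}$, and therefore by the very definition of $d(\bar\delta)$,
\begin{equation*}
d(\bar\delta)\leq J(u).
\end{equation*}
This contradicts the displayed strict inequality above, completing the argument.

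The main (and only) obstacle is the degenerate case $\|\nabla u\|_2=0$, which is what prevents us from immediately applying the variational characterization $d(\bar\delta)\leq J(u)$; handling it requires the positivity assumption $J(u)>0$ rather than merely $J(u)<d$. Apart from this, the proof is a clean contradiction based on Lemma \ref{le2.3} plus continuity of $I_\delta(u)$ in $\delta$, and I would keep the write-up short.
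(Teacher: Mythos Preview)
Your proof is correct and follows essentially the same route as the paper's: both argue by contradiction, use $J(u)>0$ to exclude $\|\nabla u\|_2=0$, locate a $\bar\delta\in(\delta_1,\delta_2)$ with $I_{\bar\delta}(u)=0$, and then derive the contradiction $d(\bar\delta)\leq J(u)<d(\bar\delta)$ from the definition of $d(\bar\delta)$ together with Lemma~\ref{le2.3}. Your write-up is slightly more explicit about the continuity of $\delta\mapsto I_\delta(u)$ and about why $d(\delta)>J(u)$ on the open interval, but the argument is the same.
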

\begin{proof}
First $J(u)>0$ implies $\|\nabla u\|_2\neq0$. If the sign of $I_{\delta}(u)$ changes for $\delta_1<\delta<\delta_2$, then there exists a $
\bar{\delta}\in(\delta_1, \delta_2)$ such that $I_{\bar{\delta}}(u)=0$. Thus by the definition of $d(\delta)$ we have $J(u)\geq d({\bar{\delta}})$,
which is contradictive with $J(u)=d(\delta_1)=d(\delta_2)<d(\bar{\delta})$.
\end{proof}

\begin{lemma}\label{le2.5}
Assume that $u(x, t)$ is a weak solution of Problem \eqref{1.1} with $0<J(u_0)<d$
and $T$ is the maximal existence time. Let $\delta_1<1<\delta_2$ be the two roots of the equation $d(\delta)=J(u_0)$.

 \ \ $\mathrm{(i)}$ If $I(u_0)>0$, then $u(x, t)\in W_{\delta}$ for $\delta_1<\delta<\delta_2$ and $0<t<T$.

 \ \ $\mathrm{(ii)}$ If $I(u_0)<0$, then $u(x, t)\in V_{\delta}$ for $\delta_1<\delta<\delta_2$ and $0<t<T$.
\end{lemma}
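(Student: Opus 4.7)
Plan. The proof is a standard invariance-of-set argument combining the energy identity \eqref{2.2} with the structural results for $d(\delta)$ and for the boundary of $W_\delta$ and $V_\delta$. The two parts are symmetric; I will describe (i) in detail and then indicate the modifications needed for (ii).

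First I would verify that $u_0 \in W_\delta$ for every $\delta \in (\delta_1,\delta_2)$. Since $d(\delta_1) = d(\delta_2) = J(u_0)$ and $d$ is strictly increasing on $(0,1]$ and strictly decreasing on $[1,+\infty)$ by Lemma \ref{le2.3}, one has $d(\delta) > J(u_0)$ on the open interval. Since $I(u_0) = I_1(u_0) > 0$, Lemma \ref{le2.4} applied to $u = u_0$ says the sign of $I_\delta(u_0)$ is constant on $(\delta_1,\delta_2)$, so $I_\delta(u_0) > 0$ throughout, and therefore $u_0 \in W_\delta$. The energy identity \eqref{2.2} immediately propagates the bound $J(u(t)) \leq J(u_0) < d(\delta)$ to every $t \in [0,T)$, so the only condition that still needs to be maintained in time is $I_\delta(u(t)) > 0$ (or $u(t) = 0$).

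Next I would argue by contradiction on that remaining condition. Fix $\delta \in (\delta_1,\delta_2)$ and set $t_0 := \inf\{t \in (0,T) : u(t) \notin W_\delta\}$, assumed finite. Using $u \in C([0,T);L^2(\Omega))$, weak $H_0^1$-subsequential limits of $u(t_n)$ as $t_n \uparrow t_0$, lower semi-continuity of $\|\nabla\cdot\|_2$, and the compact Sobolev embedding $H_0^1(\Omega) \hookrightarrow L^{q+1}(\Omega)$ in the subcritical range, a standard limiting argument produces $I_\delta(u(t_0)) \leq 0$; combined with $I_\delta(u(t)) > 0$ on $[0,t_0)$, this forces $I_\delta(u(t_0)) = 0$. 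By Lemma \ref{le2.2}(iii) either $\|\nabla u(t_0)\|_2 = 0$, so that $u(t_0) = 0 \in W_\delta$ and $t_0$ is not really an exit time, or $\|\nabla u(t_0)\|_2 \geq r(\delta) > 0$, placing $u(t_0) \in \mathcal{N}_\delta$ and hence forcing $J(u(t_0)) \geq d(\delta)$, in contradiction with the energy bound. Either way the contradiction closes.

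Part (ii) follows the same scheme with the strict inequality reversed. The alternative $\|\nabla u(t_0)\|_2 = 0$ is now excluded directly by Lemma \ref{le2.2}(ii): since $I_\delta(u(t)) < 0$ for $t < t_0$ forces $\|\nabla u(t)\|_2 > r(\delta)$, and this lower bound is preserved in the weak limit, one obtains $\|\nabla u(t_0)\|_2 \geq r(\delta) > 0$ and again $u(t_0) \in \mathcal{N}_\delta$. The genuine obstacle throughout is the limited time regularity of weak solutions: $u \in L^\infty(0,T;H_0^1(\Omega))$ with $u_t \in L^2(0,T;L^2(\Omega))$ yields only $C([0,T);L^2(\Omega))$ and weak continuity into $H_0^1$, so the first-exit-time step must be carried out via semi-continuity and compactness rather than classical continuity of $\|\nabla u(t)\|_2$.
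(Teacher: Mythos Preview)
Your proposal is correct and follows essentially the same approach as the paper: use Lemma \ref{le2.4} to place $u_0$ in $W_\delta$ (resp.\ $V_\delta$), propagate the energy bound $J(u(t))<d(\delta)$ via \eqref{2.2}, and then argue by contradiction at a first exit time, invoking Lemma \ref{le2.2} and the definition of $d(\delta)$ to force $J(u(t_0))\geq d(\delta)$. The paper treats the first-exit-time step more casually (assuming continuity of $I_\delta(u(t))$ implicitly and allowing both $t_0$ and $\delta_0$ to vary), whereas you are more explicit about the weak-limit and semicontinuity machinery needed for the available regularity; this extra care is warranted, though note your compactness remark on $H_0^1\hookrightarrow L^{q+1}$ only covers the subcritical case $q<2^*-1$.
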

\begin{proof}
$\mathrm{(i)}$ For $ 0<J(u_0)=d(\delta_1)=d(\delta_2)<d$, $I(u_0)>0$, from Lemma \ref{le2.4} we know $u_0\in W_{\delta}$ for all $\delta_1<\delta<\delta_2$.
Next we will prove $u(t)\in W_{\delta}$ for all $\delta_1<\delta<\delta_2$ and $0<t<T$. Otherwise, there exists
a $t_0\in(0, T)$ and a $\delta_0\in(\delta_1, \delta_2)$ such that $u(t_0)\in\partial W_{\delta_0}$.
Noticing that $0$ is an interior point of $W_\delta$ for any $\delta_1<\delta<\delta_2$, we thus have
\begin{eqnarray*}
I_{\delta_0}(u(t_0))=0, \ \|\nabla u(t_0)\|_2\neq0, \ \ \text{or} \ \ J(u(t_0))=d(\delta_0).
\end{eqnarray*}
As $J(u(t_0))<d(\delta_0)$ by \eqref{2.2}, we thus have $I_{\delta_0}(u(t_0))=0$ and $\|\nabla u(t_0)\|_2\neq0$, which, by the definition of $d(\delta_0)$, implies that
$J(u(t_0))\geq d(\delta_0)$, a contradiction to \eqref{2.2}.

 \ \ \ \ $\mathrm{(ii)}$ Similarly, we have $u_0\in V_{\delta}$ for all $\delta_1<\delta<\delta_2$. Next we will show that $u(t)\in V_{\delta}$ for all $\delta_1<\delta<\delta_2$ and $0<t<T$. If not, there exist
a $t_0\in(0, T)$ and a $\delta_0\in(\delta_1, \delta_2)$ such that $u(t_0)\in\partial V_{\delta_0}$, namely
\begin{eqnarray*}
I_{\delta_0}(u(t_0))=0, \ \ \text{or} \ \ J(u(t_0))=d(\delta_0).
\end{eqnarray*}
By \eqref{2.2}, we can see that $J(u(t_0))\neq d(\delta_0)$, then $I_{\delta_0}(u(t_0))=0$. We assume that $t_0$ is the first time such that
$I_{\delta_0}(u(t))=0$, then $I_{\delta_0}(u(t))<0$ for $0\leq t<t_0$. By Lemma \ref{le2.2}$\mathrm{(ii)}$ we have $\|\nabla u\|_2>r(\delta_0)$
for $0\leq t<t_0$. Hence $\|\nabla u(t_0)\|_2\geq r(\delta_0)$, which together with $I_{\delta_0}(u(t_0))=0$ implies that $u(t_0)\in \mathcal{N}_{\delta_0}$.
By the definition of $d(\delta_0)$, we again obtain $J(u(t_0))\geq d(\delta_0)$, a contradiction to \eqref{2.2}. The proof is complete.
\end{proof}

\par
\section{$\mathrm{The \ case \ J(u_0)<d}$.}
\setcounter{equation}{0}

In this section we consider the behaviors of the solution of Problem \eqref{1.1} under the condition $J(u_0)<d$
and give the threshold result for the solutions to exist globally or to blow up in finite time.
Before stating and proving our main results, we first derive some basic properties of the nonlocal Laplacian $-(a+b\|\nabla u\|_2^2)\triangle u$ in \eqref{1.1},
which are also of independent interest.

Consider the following functional:
\begin{eqnarray*}
E(u)=\Big(\dfrac{a}{2}+\dfrac{b}{4}\int_{\Omega}|\nabla u|^2\mathrm{d}x\Big)\int_{\Omega}|\nabla u|^2\mathrm{d}x, \ \ \ u\in H_0^1(\Omega).
\end{eqnarray*}
It is easy to see that $E\in C^1(H_0^1(\Omega), R)$,
and the nonlocal operator is the Fr\'{e}chet derivative operator of $E$ in the weak sense.
Denote $L=E':H_0^1(\Omega)\rightarrow H^{-1}(\Omega)$, then
\begin{eqnarray*}
\langle L(u),v\rangle=(a+b\|\nabla u\|_2^2)\int_{\Omega}\nabla u\nabla v\mathrm{d}x,\quad\forall\ u,\,v\in H_0^1(\Omega).
\end{eqnarray*}
Here $\langle,\rangle$ denotes the pairing between $H^{-1}(\Omega)$ and $H_0^1(\Omega)$.
For the nonlocal Laplacian $L$, we have the following important properties.

\begin{lemma}\label{lem-nonlocal}
$\mathrm{(i)}$ $L: H_0^1(\Omega)\rightarrow H^{-1}(\Omega)$ is a continuous, bounded and strongly monotone operator.\\
$\mathrm{(ii)}$ $L$ is a mapping of type $(S_{+})$, i.e. if $u_n\rightharpoonup u$ weakly in $H_0^1(\Omega)$ and
$\overline{\lim\limits_{n\rightarrow\infty}}\langle L(u_n),u_n-u\rangle\leq 0$, then $u_n\rightarrow u$ strongly in $H_0^1(\Omega)$.
\end{lemma}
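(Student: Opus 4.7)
Part (i) contains three claims—continuity, boundedness, and strong monotonicity—and I would dispatch them in that order, reserving the last as the main step since part (ii) will follow from it almost immediately. Boundedness is immediate from the definition and Cauchy--Schwarz: $|\langle L(u),v\rangle|\leq (a+b\|\nabla u\|_2^2)\|\nabla u\|_2\|\nabla v\|_2$, so $\|L(u)\|_{H^{-1}}\leq(a+b\|\nabla u\|_2^2)\|\nabla u\|_2$, and bounded sets in $H_0^1(\Omega)$ map to bounded sets in $H^{-1}(\Omega)$. For continuity I would take $u_n\to u$ strongly in $H_0^1(\Omega)$, add and subtract $\|\nabla u_n\|_2^2\nabla u$ in the nonlinear slot, and estimate
\[
|\langle L(u_n)-L(u),v\rangle|\leq a\|\nabla(u_n-u)\|_2\|\nabla v\|_2+b\bigl|\|\nabla u_n\|_2^2-\|\nabla u\|_2^2\bigr|\,\|\nabla u_n\|_2\|\nabla v\|_2+b\|\nabla u\|_2^2\|\nabla(u_n-u)\|_2\|\nabla v\|_2.
\]
Using boundedness of $\{\|\nabla u_n\|_2\}$ together with $\bigl|\|\nabla u_n\|_2-\|\nabla u\|_2\bigr|\leq\|\nabla(u_n-u)\|_2$, taking the supremum over $\|\nabla v\|_2\leq 1$ yields $L(u_n)\to L(u)$ in $H^{-1}(\Omega)$.

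The heart of the argument is strong monotonicity. I would expand
\[
\langle L(u)-L(v),u-v\rangle=a\|\nabla(u-v)\|_2^2+b\bigl(\|\nabla u\|_2^2\nabla u-\|\nabla v\|_2^2\nabla v,\nabla(u-v)\bigr),
\]
and show the $b$-bracket is nonnegative. Expanding the $L^2$ inner product, it equals
\[
\|\nabla u\|_2^4+\|\nabla v\|_2^4-(\|\nabla u\|_2^2+\|\nabla v\|_2^2)(\nabla u,\nabla v),
\]
which, after applying Cauchy--Schwarz to $(\nabla u,\nabla v)$ and then invoking the factoring identity $\alpha^4+\beta^4-(\alpha^2+\beta^2)\alpha\beta=(\alpha-\beta)(\alpha^3-\beta^3)\geq0$ with $\alpha=\|\nabla u\|_2$, $\beta=\|\nabla v\|_2$, is manifestly nonnegative. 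Hence $\langle L(u)-L(v),u-v\rangle\geq a\|\nabla(u-v)\|_2^2$, i.e.\ $L$ is strongly monotone with constant $a>0$.

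For (ii), suppose $u_n\rightharpoonup u$ in $H_0^1(\Omega)$ and $\limsup_{n\to\infty}\langle L(u_n),u_n-u\rangle\leq 0$. Strong monotonicity gives
\[
a\|\nabla(u_n-u)\|_2^2\leq\langle L(u_n)-L(u),u_n-u\rangle=\langle L(u_n),u_n-u\rangle-\langle L(u),u_n-u\rangle,
\]
and the second term on the right vanishes as $n\to\infty$ because $L(u)$ is a fixed element of $H^{-1}(\Omega)$ evaluated on the weakly-null sequence $u_n-u$. The hypothesis then forces $\limsup_{n\to\infty}\|\nabla(u_n-u)\|_2^2\leq 0$, so $u_n\to u$ strongly in $H_0^1(\Omega)$. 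The only genuinely delicate point in the whole argument is the factoring identity establishing nonnegativity of the quartic cross term; once that is in hand, the rest is routine bookkeeping.
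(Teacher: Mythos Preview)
Your proof is correct and follows essentially the same route as the paper: expand $\langle L(u)-L(v),u-v\rangle$, show the $b$-bracket is nonnegative, obtain strong monotonicity with constant $a$, and then deduce the $(S_+)$ property directly from strong monotonicity plus weak convergence. The only cosmetic difference is in handling the quartic cross term: the paper bounds $(\nabla u,\nabla v)\leq\tfrac{1}{2}(\|\nabla u\|_2^2+\|\nabla v\|_2^2)$ and arrives at $\tfrac{b}{2}(\|\nabla u\|_2^2-\|\nabla v\|_2^2)^2\geq 0$, whereas you use Cauchy--Schwarz followed by the factoring $(\alpha-\beta)(\alpha^3-\beta^3)\geq 0$; these are equivalent elementary manipulations and neither offers a real advantage over the other.
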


\begin{proof}
$\mathrm{(i)}$ We say that an operator $L: H_0^1(\Omega)\rightarrow H^{-1}(\Omega)$ is strongly monotone if and only if there exists a positive constant $c$
such that
$$\langle L(u)-L(v), u-v\rangle\geq c\|u-v\|^2_{H_0^1(\Omega)},\quad\forall\ u,v\in H_0^1(\Omega).$$
It is obvious that $L$ is continuous and bounded. For any  $u, v\in H_0^1(\Omega)$, by using Cauchy-Schwarz inequality we have
\begin{eqnarray*}\label{L.1}
&&\langle L(u)-L(v), u-v\rangle\\
&=&\int_{\Omega}((a+b\|\nabla u\|_2^2)\nabla u-(a+b\|\nabla v\|_2^2)\nabla v)(\nabla u-\nabla v)\mathrm{d}x\\
&=&a\|\nabla(u-v)\|_2^2+b\int_{\Omega}(\|\nabla u\|_2^2\nabla u-\|\nabla v\|_2^2\nabla v)(\nabla u-\nabla v)\mathrm{d}x\\
&=&a\|u-v\|_{H_0^1(\Omega)}^2+b\Big(\|\nabla u\|_2^4-\|\nabla u\|_2^2\int_{\Omega}\nabla u\nabla v\mathrm{d}x-\|\nabla v\|_2^2\int_{\Omega}\nabla u\nabla v\mathrm{d}x+\|\nabla v\|_2^4\Big)\\
&\geq&a\|u-v\|_{H_0^1(\Omega)}^2+b\Big(\|\nabla u\|_2^4-\|\nabla u\|_2^2\frac{\|\nabla u\|_2^2\|+\nabla v\|_2^2}{2}-\|\nabla v\|_2^2\frac{\|\nabla u\|_2^2+\|\nabla v\|_2^2}{2}+\|\nabla v\|_2^4\Big)\\
&=&a\|u-v\|_{H_0^1(\Omega)}^2+\frac{b}{2}(\|\nabla u\|_2^2-\|\nabla v\|_2^2)^2\\
&\geq&a\|u-v\|_{H_0^1(\Omega)}^2.
\end{eqnarray*}
Therefore, the strongly monotonicity of $L$ is proved.

$\mathrm{(ii)}$ If $u_n\rightharpoonup u$ weakly in $H_0^1(\Omega)$ and $\overline{\lim\limits_{n\rightarrow\infty}}
\langle L(u_n),u_n-u\rangle\leq 0$, then we have
\begin{equation*}
\overline{\lim\limits_{n\rightarrow\infty}}\langle L(u_n)-L(u),u_n-u\rangle\leq0,
\end{equation*}
which, together with the strongly monotonicity of $L$, implies that $u_n\rightarrow u$ strongly in $H_0^1(\Omega)$.
Hence $L$ is an $S_{+}$ operator. The proof is complete.
\end{proof}

\begin{theorem}\label{th3.1}(Global existence for $J(u_0)<d$.)
Assume $a, b >0$, $3<q\leq2^*-1$ and $u_0\in H_0^1(\Omega)$. If
$J(u_0)<d$ and $I(u_0)>0$, then Problem \eqref{1.1} admits a global weak solution $u\in L^{\infty}(0,\infty; H_0^1(\Omega))$
with $u_t\in L^2(0,\infty; L^2(\Omega))$ and $u(t)\in W$ for $0\leq t<\infty$.
Moreover, $\|u\|_2^2\leq\|u_0\|_2^2e^{-2a\lambda_1(1-\delta_1)t}$,
where $\lambda_1>0$ is the first eigenvalue of $-\Delta$ in $\Omega$ under homogeneous Dirichlet boundary condition.
In addition, the weak solution is unique if it is bounded.
\end{theorem}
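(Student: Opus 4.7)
The plan is to construct weak solutions by the classical Faedo--Galerkin method, with the potential well framework of Lemmas \ref{le2.1}--\ref{le2.5} providing the necessary a priori bounds, and with Lemma \ref{lem-nonlocal} handling the nonlocal nonlinearity when passing to the limit. Let $\{w_j\}_{j=1}^\infty$ be the $L^2$-orthonormal basis of $H_0^1(\Omega)$ consisting of the eigenfunctions of $-\Delta$ with homogeneous Dirichlet data. I look for approximate solutions $u_m(x,t)=\sum_{j=1}^m g_{jm}(t)w_j(x)$ satisfying $u_m(0)\to u_0$ in $H_0^1(\Omega)$ and the projected ODE system obtained by testing \eqref{2.1} against $w_1,\dots,w_m$. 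Standard ODE theory gives local solvability; multiplying by $g'_{jm}(t)$ and summing produces the approximate energy identity $\int_0^t\|u_{m\tau}\|_2^2\,d\tau + J(u_m(t))=J(u_m(0))$.

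Next I set up a priori estimates using the potential well invariance. Since $J(u_0)<d$ and $I(u_0)>0$, continuity gives $J(u_m(0))<d$ and $I(u_m(0))>0$ for all large $m$, so Lemma \ref{le2.5}(i) (with $\delta=1$, applied to $u_m$) yields $u_m(t)\in W$ for all $t$ in the existence interval. Rewriting
\[
J(u_m)=\tfrac{a(q-1)}{2(q+1)}\|\nabla u_m\|_2^2+\tfrac{b(q-3)}{4(q+1)}\|\nabla u_m\|_2^4+\tfrac{1}{q+1}I(u_m),
\]
and using $I(u_m)>0$, I obtain a uniform bound on $\|\nabla u_m\|_2$ from $J(u_m)\le J(u_m(0))<d$, which in turn extends the solutions globally. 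The energy identity then yields a uniform $L^2(0,\infty;L^2(\Omega))$ bound on $u_{mt}$, and $\|u_m\|_{q+1}^{q+1}<(a+b\|\nabla u_m\|_2^2)\|\nabla u_m\|_2^2$ controls the source term in $L^\infty(0,\infty;L^{(q+1)/q}(\Omega))$.

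Passing to the limit is the main obstacle, because the nonlocal coefficient $a+b\|\nabla u_m\|_2^2$ is only weakly lower semicontinuous in general. Extract a subsequence with $u_m\rightharpoonup u$ weakly$^*$ in $L^\infty(0,\infty;H_0^1(\Omega))$ and $u_{mt}\rightharpoonup u_t$ weakly in $L^2$; by the Aubin--Lions lemma, $u_m\to u$ strongly in $C([0,T];L^2(\Omega))$ and in $L^{q+1}(\Omega\times(0,T))$ for every $T>0$, so $|u_m|^{q-1}u_m\to |u|^{q-1}u$ in $L^{(q+1)/q}$. To recover the nonlocal term I test the approximate equation by $u_m-u$, integrate in time, and use the strong $L^2$ convergence of $u_m$ together with $u_{mt}\rightharpoonup u_t$ to conclude $\overline{\lim}_{m\to\infty}\int_0^T\langle L(u_m),u_m-u\rangle\,dt\le 0$. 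Pointwise-in-time application of Lemma \ref{lem-nonlocal}(ii) (the $S_+$ property) then upgrades weak to strong convergence in $H_0^1(\Omega)$ for a.e.\ $t$, which in particular gives $\|\nabla u_m\|_2\to\|\nabla u\|_2$ a.e., so the dominated convergence theorem lets me pass to the limit in the nonlocal coefficient and identify the limit equation \eqref{2.1}. The energy identity \eqref{2.2} follows from lower semicontinuity and the approximate identity, and the invariance $u(t)\in W$ for $t\ge 0$ is inherited from the approximate solutions.

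For the decay estimate, let $\delta_1<1<\delta_2$ be the roots of $d(\delta)=J(u_0)$. By Lemma \ref{le2.5}(i), $u(t)\in W_{\delta_1}$, i.e.\ $I_{\delta_1}(u(t))\ge 0$, which rewritten gives $\|u\|_{q+1}^{q+1}\le \delta_1(a+b\|\nabla u\|_2^2)\|\nabla u\|_2^2$. Testing the equation by $u$ and invoking this together with Poincar\'e's inequality $\|\nabla u\|_2^2\ge\lambda_1\|u\|_2^2$ yields
\[
\tfrac{1}{2}\tfrac{d}{dt}\|u\|_2^2+(1-\delta_1)a\lambda_1\|u\|_2^2\le 0,
\]
and Gronwall's inequality produces the stated exponential decay. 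Finally, for uniqueness when the solution is bounded, I subtract two solutions $u,v$, test by $w=u-v$, apply the strong monotonicity of $L$ from Lemma \ref{lem-nonlocal}(i), and estimate $\bigl||u|^{q-1}u-|v|^{q-1}v\bigr|\le C(\|u\|_\infty,\|v\|_\infty)|w|$ so that a straightforward Gronwall argument on $\|w\|_2^2$ gives $w\equiv 0$.
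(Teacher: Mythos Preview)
Your proposal is correct and follows essentially the same approach as the paper: Galerkin approximation with potential-well a priori bounds to get $u^m(t)\in W$ and uniform $H_0^1$ estimates, the decay estimate via $I_{\delta_1}(u)\ge 0$ and Poincar\'e, and uniqueness of bounded solutions via the strong monotonicity of $L$ from Lemma~\ref{lem-nonlocal}(i). The only minor variation is in identifying the nonlocal coefficient in the limit: the paper tests the approximate equation by $u^m$ and the (provisional) limit equation by $u$ and compares the two identities to deduce $\|\nabla u^m\|_2\to\|\nabla u\|_2$ directly, whereas you test by $u_m-u$ and invoke the $S_+$ property of Lemma~\ref{lem-nonlocal}(ii)---both tactics rest on the same monotonicity and are essentially equivalent.
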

\begin{proof}
We will divide the proof into three steps for the convenience of the readers.\\
{\bf Step 1. Global existence.} Global existence of weak solutions will be proved by combining Galerkin's approximation with a priori estimates.
Let $\{\phi_j(x)\}$ be a system of orthogonal basis of $H_0^1(\Omega)$ and construct the approximate solutions $u^m(x, t)$ of Problem \eqref{1.1}
\begin{eqnarray*}
u^m(x, t)=\sum_{j=1}^m a^m_j(t)\phi_j(x), \ \ m=1, \ 2, \ \cdots,
\end{eqnarray*}
satisfying
\begin{equation}\label{3.1}
(u^m_t, \phi_j)+a(\nabla u^m, \nabla\phi_j)+b\|\nabla u^m\|_2^2(\nabla u^m, \nabla\phi_j)=(|u^m|^{q-1}u^m, {\phi_j}),\ j=1,2,\cdots,
\end{equation}
\begin{equation}\label{3.2}
u^m(x,0)=\sum_{j=1}^m b^m_j\phi_j(x)\rightarrow u_0(x) \ \ in \ \ H_0^1(\Omega). \ \ \ \ \ \ \ \ \ \ \ \ \ \ \ \ \ \ \ \ \ \ \ \ \ \
\end{equation}
Multiplying \eqref{3.1} by $\dfrac{d}{dt}a^m_j(t)$, summing for $j$ from $1$ to $m$, and integrating with respect to $t$ from $0$ to $t$, we obtain
\begin{eqnarray}\label{3.3}
\int_0^t\|u^m_{\tau}\|_2^2\mathrm{d}\tau+J(u^m)=J(u^m(0)), \ \ \ 0\leq t<\infty.
\end{eqnarray}
Due to the convergence of $u^m(x, 0)\rightarrow u_0(x)$ in $H_0^1(\Omega)$, we have
\begin{eqnarray*}
J(u^m(x, 0))\rightarrow J(u_0(x))<d\quad \text{and} \quad I(u^m(x, 0))\rightarrow I(u_0(x))>0.
\end{eqnarray*}
Then for sufficiently large $m$ and for any $0\leq t<\infty$, we obtain
\begin{eqnarray}\label{3.4}
\int_0^t\|u^m_{\tau}\|_2^2\mathrm{d}\tau+J(u^m)=J(u^m(0))<d \quad \text{and} \quad I(u^m(x, 0))>0.
\end{eqnarray}

Similarly to the proof of Lemma \ref{le2.5} we can show from \eqref{3.4} that $u^m(x,t)\in W$ for sufficiently large $m$ and $0\leq t<\infty$.
Thus $I(u^m(x,t))>0$ for all $t\geq0$. Then from the following equality
\begin{eqnarray*}
J(u^m)=\frac{a(q-1)}{2(q+1)}\|\nabla u^m\|_2^2+\frac{b(q-3)}{4(q+1)}\|\nabla u^m\|^4_2+\frac{1}{q+1}I(u^m),
\end{eqnarray*}
and \eqref{3.4} we obtain
\begin{eqnarray}\label{3.5}
\int_0^t\|u^m_{\tau}\|_2^2\mathrm{d}\tau+\frac{a(q-1)}{2(q+1)}\|\nabla u^m\|_2^2+\frac{b(q-3)}{4(q+1)}\|\nabla u^m\|^4_2<d,
\end{eqnarray}
for sufficiently large $m$ and for any $0\leq t<\infty$, which then yields
\begin{equation}\label{e1}
\|u\|^2_{H_0^1(\Omega)}\leq \dfrac{2d(q+1)}{a(q-1)},\quad 0\leq t<\infty,
\end{equation}
\begin{equation}\label{e2}
\int_0^t\|u^m_{\tau}\|_2^2\mathrm{d}\tau<d,\quad 0\leq t<\infty,
\end{equation}
\begin{equation}\label{e3}
\||u^m|^{q-1}u^m\|_{\frac{q+1}{q}}=\|u^m\|^q_{q+1}\leq S^q\|u\|^q_{H_0^1(\Omega)}<S^q\Big(\dfrac{2d(q+1)}{a(q-1)}\Big)^{\frac{q}{2}},\quad 0\leq t<\infty.
\end{equation}
Therefore, by the diagonal method there exist a $u$ and a subsequence of $\{u^m\}$ (still denoted by $\{u^m\}$) such that for each $T>0$, as $m\rightarrow\infty$,
\begin{eqnarray}\label{3.6}
\begin{cases}
u^m_t\rightharpoonup u_t, \ & weakly \ in \ L^2(0, T; L^2(\Omega)),\\
u^m\rightharpoonup u, \ & weakly \ in \ L^{2}(0, T; H_0^1(\Omega)),\\
u^m\rightarrow u, & \ strongly \ in \ L^2(\Omega\times(0,T))\ and\ a.e.\ in \ \Omega\times(0,T),\\
|u^m|^{q-1}u^m\rightharpoonup |u|^{q-1}u, & \ weakly \ in \ L^{\frac{q+1}{q}}(\Omega\times(0, T)).
\end{cases}
\end{eqnarray}
Hence for $j$ fixed and letting $m\rightarrow\infty$ in \eqref{3.1}, one has
\begin{eqnarray*}
(u_t, \phi_j)+a(\nabla u, \nabla\phi_j)+b\lim_{m\rightarrow\infty}\|\nabla u^m\|_2^2(\nabla u, \nabla\phi_j)=(|u|^{q-1}u, {\phi_j}).
\end{eqnarray*}
Then for any $\varphi\in H_0^1(\Omega)$,
\begin{eqnarray}\label{3.71}
(u_t, \varphi)+a(\nabla u, \nabla\varphi)+b\lim_{m\rightarrow\infty}\|\nabla u^m\|_2^2(\nabla u, \nabla\varphi)=(|u|^{q-1}u, \varphi).
\end{eqnarray}
Choosing $\varphi=u$ in \eqref{3.71}, we have
\begin{eqnarray}\label{3.81}
(u_t, u)+a(\nabla u, \nabla u)+b\lim_{m\rightarrow\infty}\|\nabla u^m\|_2^2(\nabla u, \nabla u)=(|u|^{q-1}u, u).
\end{eqnarray}
On the other hand, choosing $\phi_j=u^m$ in \eqref{3.1}, we get
\begin{eqnarray}\label{3.91}
(u^m_t, u^m)+a(\nabla u^m, \nabla u^m)+b\|\nabla u^m\|_2^2(\nabla u^m, \nabla u^m)=(|u^m|^{q-1}u^m, u^m)
\end{eqnarray}
Using the convergence in \eqref{3.6}, letting $m\rightarrow\infty$ in \eqref{3.91} and comparing it with \eqref{3.81}, we obtain
\begin{equation}
\lim_{m\rightarrow\infty}\|\nabla u^m\|_2=\|\nabla u\|_2.
\end{equation}
Therefore, \eqref{3.71} shows that for any $\varphi\in H_0^1(\Omega)$
\begin{eqnarray}\label{3.11}
(u_t, \varphi)+a(\nabla u, \nabla\varphi)+b\|\nabla u\|_2^2(\nabla u, \nabla\varphi)=(|u|^{q-1}u, \varphi).
\end{eqnarray}
Besides, due to $u^m(x, 0)\rightarrow u_0(x)$ strongly in $H_0^1(\Omega)$, we have $u(x, 0)=u_0(x)$.
To prove \eqref{2.2} we first assume that $u(x,t)$ is smooth enough such that $u_t\in L^2(0,T;H_0^1(\Omega))$.
Choosing $\phi=u_t$ as a test function and integrating \eqref{2.1} over $[0,t]$ one sees that \eqref{2.2} is true.
By the density of $L^2(0,T;H_0^1(\Omega))$ in $L^2(\Omega\times(0,T))$ it is known that \eqref{2.2} also holds for weak solutions
of \eqref{1.1}. Therefore $u$ is a global weak solution of Problem \eqref{1.1}.

{\bf Step 2. Decay rate.} Taking $\phi=u$ in \eqref{2.1}, we get
\begin{eqnarray*}
\frac{1}{2}\frac{d}{dt}\|u\|_2^2=(u_t, u)=-a\|\nabla u\|_2^2-b\|\nabla u\|_2^4+\|u\|_{q+1}^{q+1}=-I(u).
\end{eqnarray*}
From Lemma \ref{le2.4} it follows that $u(x, t)\in W_{\delta}$ for $\delta_1<\delta<\delta_2$ and $0<t<\infty$ under the condition
$J(u_0)<d$ and $I(u_0)>0$. Thus we have $I_{\delta_1}(u)\geq0$ for $0<t<\infty$. Therefore,
\begin{eqnarray*}
\frac{1}{2}\frac{d}{dt}\|u\|_2^2=-I(u)=a(\delta_1-1)\|\nabla u\|_2^2+b(\delta_1-1)\|\nabla u\|_2^4-I_{\delta_1}(u)\leq a\lambda_1(\delta_1-1)\|u\|_2^2,
\end{eqnarray*}
Consequently,
\begin{eqnarray*}
\|u\|_2^2\leq\|u_0\|_2^2e^{-2a\lambda_1(1-\delta_1)t}.
\end{eqnarray*}

{\bf Step 3. Uniqueness of bounded solution.} To prove the uniqueness of bounded weak solution,
we assume that both $u$ and $v$ are bounded weak solutions of Problem \eqref{1.1}.
Then, for any $\varphi\in H_0^1(\Omega)$, we have
\begin{eqnarray*}
&&(u_t, \varphi)+a(\nabla u, \nabla\varphi)+b\|\nabla u\|_2^2(\nabla u, \nabla\varphi)=(|u|^{q-1}u, \varphi),\\
&&(v_t, \varphi)+a(\nabla v, \nabla\varphi)+b\|\nabla v\|_2^2(\nabla v, \nabla\varphi)=(|v|^{q-1}v, \varphi).
\end{eqnarray*}
Subtracting the above two equalities, taking $\varphi=u-v\in H_0^1(\Omega)$ and integrating over $(0,t)$ for any $t>0$, we obtain
\begin{eqnarray*}
&&\int_0^t\int_{\Omega}(u-v)_t(u-v)+a|\nabla(u-v)|^2+(b\|\nabla u\|_2^2\nabla u-b\|\nabla v\|_2^2\nabla v)\nabla(u-v)\mathrm{d}x\mathrm{d}t\\
&=&\int_0^t\int_{\Omega}(|u|^{q-1}u-|v|^{q-1}v)(u-v)\mathrm{d}x\mathrm{d}t.
\end{eqnarray*}
Since $(u-v)(x,0)=0$ and $q>3$, we obtain, with the help of Lemma \ref{lem-nonlocal} and the boundedness of $u$ and $v$, that
\begin{eqnarray*}
\int_{\Omega}(u-v)^2(x, t)\mathrm{d}x\leq C\int_0^t\int_{\Omega}(u-v)^2(x, t)\mathrm{d}x\mathrm{d}t,
\end{eqnarray*}
where $C>0$ is a constant depending only on $q$ and the bound of $u,v$.
It then follows from Gronwall's inequality that
\begin{eqnarray*}
\int_{\Omega}w^2(x, t)\mathrm{d}x=0.
\end{eqnarray*}
Thus $w=0$ a.e. in $\Omega\times(0, \infty)$ and the whole proof is complete.
\end{proof}

\begin{theorem}\label{th3.2}(Blow-up for $J(u_0)<d$.)
Assume $a, b>0$, $3<q\leq2^*-1$ and let $u$ be a weak solution of Problem \eqref{1.1} with $u_0\in H_0^1(\Omega)$. If $J(u_0)<d$
and $I(u_0)<0$, then there exists a finite time $T$ such that $u$ blows up at $T$ in the sense that
\begin{eqnarray*}
\lim_{t\rightarrow T}\int_0^t\|u\|_2^2\mathrm{d}\tau=+\infty.
\end{eqnarray*}
\end{theorem}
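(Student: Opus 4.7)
My plan is to establish a Bernoulli-type differential inequality for the functional
\[
F(t):=\int_0^t\|u(\tau)\|_2^2\,d\tau,
\]
from which finite-time blow-up follows by direct integration. Testing the weak formulation (2.1) with $\phi=u(t)$ gives $\frac{d}{dt}\|u\|_2^2=-2I(u(t))$, so
\[
F'(t)=\|u(t)\|_2^2,\qquad F''(t)=-2I(u(t)).
\]
Note $u_0\not\equiv 0$ because $I(u_0)<0$, hence $F'(0)=\|u_0\|_2^2>0$. The goal is to produce a constant $C>0$ with $F''(t)\ge C(F'(t))^2$ throughout the maximal interval $(0,T)$; setting $y=F'$, the ODE inequality $y'\ge Cy^2$ with $y(0)>0$ forces $y(t)\ge y(0)/(1-Cy(0)t)$, whence $F'\to\infty$ and hence $F\to\infty$ at a finite time $T\le 1/(C\|u_0\|_2^2)$, which is exactly the conclusion of the theorem.

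The heart of the argument is the lower bound $-I(u(t))\ge c\|\nabla u(t)\|_2^4$ for some $c>0$, which, combined with Poincar\'{e}'s inequality $\|\nabla u\|_2^2\ge\lambda_1\|u\|_2^2$, yields $F''\ge 2c\lambda_1^2(F')^2$. When $0<J(u_0)<d$, this bound comes from Lemma \ref{le2.5}(ii): the invariance gives $u(t)\in V_\delta$ for every $\delta\in(\delta_1,\delta_2)$, and letting $\delta\to\delta_2^-$ by continuity of $\delta\mapsto I_\delta(u(t))$ yields $I_{\delta_2}(u(t))\le 0$, i.e.\ $\|u\|_{q+1}^{q+1}\ge\delta_2(a\|\nabla u\|_2^2+b\|\nabla u\|_2^4)$; substituting into the definition of $I$ gives
\[
-I(u)\ge(\delta_2-1)\bigl(a\|\nabla u\|_2^2+b\|\nabla u\|_2^4\bigr)\ge(\delta_2-1)b\|\nabla u\|_2^4.
\]
When $J(u_0)\le 0$, Lemma \ref{le2.5} does not directly apply, and I argue instead from the algebraic identity $(q+1)J(u)-I(u)=\frac{a(q-1)}{2}\|\nabla u\|_2^2+\frac{b(q-3)}{4}\|\nabla u\|_2^4$ together with the energy inequality $J(u(t))\le J(u_0)\le 0$; this yields $-I(u)\ge\frac{b(q-3)}{4}\|\nabla u\|_2^4$, the coefficient being positive precisely because $q>3$.

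The main technical obstacle is the passage from the open interval $(\delta_1,\delta_2)$ of Lemma \ref{le2.5} to its endpoint $\delta_2$, together with a stand-alone invariance argument for the case $J(u_0)\le 0$. The former is routine by the continuity of $\delta\mapsto I_\delta(u(t))$; the latter follows from a contradiction argument using $d>0$ and the Nehari manifold (if $I(u(t^*))=0$ at some first $t^*>0$ with $u(t^*)\not\equiv 0$, then $u(t^*)\in\mathcal{N}$, forcing $J(u(t^*))\ge d>0\ge J(u_0)\ge J(u(t^*))$, a contradiction). The structural reason the proof can be closed at the level of a first-order ODE, rather than requiring the more elaborate Levine concavity method, is the presence of the quartic Kirchhoff term $b\|\nabla u\|_2^4$: it is exactly what supplies the $(\|u\|_2^2)^2$ lower bound on $F''$ that forces finite-time blow-up of $F'$, and therefore of $F$ itself.
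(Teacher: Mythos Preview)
Your proof is correct and takes a genuinely different route from the paper's. The paper follows Levine's concavity scheme: with $M(t)=\int_0^t\|u\|_2^2\,d\tau$ it derives, via the energy identity and Cauchy--Schwarz on $\int_0^t\|u_\tau\|_2^2\,d\tau$, the second-order inequality
\[
M''(t)M(t)-\frac{q+1}{2}\,M'(t)^2>0
\]
for sufficiently large $t$, and then appeals to the concavity of $M^{-(q-1)/2}$. To reach ``sufficiently large $t$'' it first proves a \emph{uniform} lower bound $M''(t)\ge 2a(\delta_2-1)r^2(\delta_2)$ (from $I_{\delta_2}(u)\le 0$ and $\|\nabla u\|_2\ge r(\delta_2)$), integrates twice, and only then closes the concavity inequality. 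Your argument bypasses all of this: from the same invariance $I_{\delta_2}(u)\le 0$ (or, when $J(u_0)\le 0$, directly from the algebraic identity $(q+1)J-I=\tfrac{a(q-1)}{2}\|\nabla u\|_2^2+\tfrac{b(q-3)}{4}\|\nabla u\|_2^4$) you extract $-I(u)\ge c\,\|\nabla u\|_2^4$, combine with Poincar\'e to get the first-order Bernoulli inequality $F''\ge C(F')^2$ for $F'=\|u\|_2^2$, and integrate explicitly. This is cleaner, holds from $t=0$, and yields the explicit upper bound $T\le 1/(C\|u_0\|_2^2)$ on the blow-up time. As you correctly note, it is the quartic Kirchhoff term $b\|\nabla u\|_2^4$ that makes this shortcut available; the paper's Levine machinery, by contrast, would still go through for $b=0$ (the semilinear case), so its argument is the more portable one. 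One minor remark: the ``stand-alone invariance'' you sketch for $J(u_0)\le 0$ is not actually needed for your inequality in that case, since $J(u(t))\le 0$ alone already gives $-I(u)\ge\tfrac{b(q-3)}{4}\|\nabla u\|_2^4$; and $u(t)\not\equiv 0$ follows automatically because $F'$ is nondecreasing with $F'(0)>0$.
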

\begin{proof}
Let $u$ be a weak solution of Problem \eqref{1.1} with $J(u_0)<d$, $I(u_0)<0$. We define
\begin{eqnarray*}
M(t)=\int_0^t\|u\|_2^2\mathrm{d}\tau,
\end{eqnarray*}
then
\begin{eqnarray}\label{3.6}
M'(t)=\|u\|_2^2,
\end{eqnarray}
and
\begin{eqnarray}\label{3.7}
M''(t)=2(u_t, u)=-2(a\|\nabla u\|_2^2+b\|\nabla u\|_2^4-\|u\|^{q+1}_{q+1})=-2I(u).
\end{eqnarray}
On the other hand,
\begin{eqnarray}\label{3.8}
J(u)=\frac{a(q-1)}{2(q+1)}\|\nabla u\|_2^2+\frac{b(q-3)}{4(q+1)}\|\nabla u\|_2^4+\frac{1}{q+1}I(u).
\end{eqnarray}
By \eqref{2.1}, \eqref{3.7} and \eqref{3.8}, we can get
\begin{eqnarray*}
M''(t)&=&a(q-1)\|\nabla u\|_2^2+\frac{b(q-3)}{2}\|\nabla u\|^4_2-2(q+1)J(u)\\
&\geq&a(q-1)\|\nabla u\|_2^2+\frac{b(q-3)}{2}\|\nabla u\|^4_2+2(q+1)\int_0^t\|u_{\tau}\|_2^2\mathrm{d}\tau-2(q+1)J(u_0)\\
&\geq&a(q-1)\lambda_1M'(t)+2(q+1)\int_0^t\|u_{\tau}\|_2^2\mathrm{d}\tau-2(q+1)J(u_0).
\end{eqnarray*}
Note that
\begin{eqnarray*}
(M'(t))^2=4\bigg(\int_0^t\int_{\Omega} u_{\tau}\cdot u\mathrm{d}x\mathrm{d}\tau\bigg)^2+2\|u_0\|_2^2M'(t)-\|u_0\|_2^4.
\end{eqnarray*}
Hence, we have
\begin{eqnarray*}
M''(t)M(t)-\frac{q+1}{2}M'(t)^2&\geq&2(q+1)\int_0^t\|u_{\tau}\|_2^2\mathrm{d}\tau\int_0^t\|u\|_2^2\mathrm{d}\tau
-2(q+1)J(u_0)M(t)\\
&& + \ a(q-1)\lambda_1M'(t)M(t)-2(q+1)\bigg(\int_0^t\int_{\Omega}u_{\tau}u\mathrm{d}x\mathrm{d}\tau\bigg)^2\\
&& - \ (q+1)\|u_0\|^2_2M'(t)+\frac{q+1}{2}\|u_0\|_2^4.
\end{eqnarray*}
Applying Cauchy-Schwartz inequality to the fourth term of the right-hand side of the above inequality
and dropping the last positive one, we get
\begin{eqnarray}\label{3.9}
&&M''(t)M(t)-\frac{q+1}{2}M'(t)^2\nonumber\\
&\geq&a(q-1)\lambda_1M'(t)M(t)-2(q+1)J(u_0)M(t)-(q+1)\|u_0\|^2_2M'(t).
\end{eqnarray}
Next, we discuss the following two cases.\\
$\mathrm{(i)}$ If $J(u_0)\leq 0$, then \eqref{3.9} implies
\begin{eqnarray*}
M''(t)M(t)-\frac{q+1}{2}M'(t)^2\geq a(q-1)\lambda_1M'(t)M(t)-(q+1)\|u_0\|^2_2M'(t).
\end{eqnarray*}
Now we will prove that $I(u)<0$ for all $t>0$. Otherwise, there must be a $t_0>0$ such that $I(u(t_0))=0$ and
$I(u(t))<0$ for $0\leq t<t_0$. From Lemma \ref{le2.2}(ii), $\|\nabla u\|_2>r(1)$ for $0\leq t<t_0$, and
$\|\nabla u(t_0)\|_2\geq r(1)$. Hence, by the definition of $d$, we have $J(u(t_0))\geq d$, which contradicts \eqref{2.2}.
Then from \eqref{3.7}, we can get $M''(t)>0$, for $t\geq 0$. Since $M'(0)\geq0$, there exists a $t_0\geq0$ such that $M'(t_0)>0$.
Thus we have
\begin{eqnarray*}
M(t)\geq M'(t_0)(t-t_0).
\end{eqnarray*}
Therefore, for sufficiently large t, we have
\begin{eqnarray}\label{3.10}
a(q-1)\lambda_1M(t)> (q+1)\|u_{0}\|_2^2.
\end{eqnarray}
Consequently,
\begin{eqnarray*}
M''(t)M(t)-\frac{q+1}{2}M'(t)^2>0.
\end{eqnarray*}
(ii) If $0<J(u_0)<d$, then by Lemma \ref{le2.5} we have $u(t)\in V_{\delta}$ for $t\geq0$ and $\delta_1<\delta<\delta_2$,
where $\delta_1<1<\delta_2$ are the two roots of $d(\delta)=J(u_0)$. Hence $I_{\delta_2}(u)\leq 0$ and $\|\nabla u\|_2^2\geq r(\delta_2)$ for
$t\geq 0$. By \eqref{3.7} we have for $t\geq0$
\begin{eqnarray*}
M''(t)&=&-2I(u)=2a(\delta_2-1)\|\nabla u\|_2^2+2b(\delta_2-1)\|\nabla u\|_2^4-2I_{\delta_2}(u)\\
&\geq& 2a(\delta_2-1)r^2(\delta_2).
\end{eqnarray*}
It follows then for all $t\geq 0$ that
\begin{eqnarray*}
&&M'(t)\geq 2a(\delta_2-1)r^2(\delta_2)t, \\
&&M(t)\geq a(\delta_2-1)r^2(\delta_2)t^2.
\end{eqnarray*}
Therefore, for sufficiently large $t$, we have
\begin{eqnarray*}
&&\frac{a(q-1)\lambda_1}{2}M(t)>(q+1)\|u_0\|_2^2,\\
&&\frac{a(q-1)\lambda_1}{2}M'(t)>2(q+1)J(u_0).
\end{eqnarray*}
Consequently, from \eqref{3.9}, we obtain
\begin{eqnarray*}
M''(t)M(t)-\frac{q+1}{2}M'(t)^2>0.
\end{eqnarray*}
The remainder of the proof follows from the standard concavity arguments as those in \cite{Levine1973,Payne1975}
and the details are therefore omitted. The proof is complete.
\end{proof}

\par
\section{$\mathrm{The \ case \ J(u_0)=d}$.}
\setcounter{equation}{0}

For the critical case $J(u_0)=d$, we have also obtained the following threshold results.
\begin{theorem}\label{th4.1}
Assume $a, b>0$, $3<q\leq2^*-1$, $u_0\in H_0^1(\Omega)$. If $J(u_0)=d$ and $I(u_0)\geq0$, then Problem \eqref{1.1} admits a global weak
solution $u\in L^{\infty}(0,\infty; H_0^1(\Omega))$ with $u_t\in L^2(0,\infty; L^2(\Omega))$ and $I(u)\geq 0$. Moreover, if $I(u)>0$, the solution
does not vanish and there exist positive constants $C_1$ and $C_2$ such that $\|u\|_2^2\leq C_1e^{-C_2t}$.
If not, then there exists a solution that vanishes in finite time.
\end{theorem}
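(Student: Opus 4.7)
The overall strategy for the critical-energy case is the classical perturbation-to-subcritical trick: approximate $u_0$ by initial data with strictly subcritical energy, invoke Theorem \ref{th3.1} for each approximation, and pass to the limit. Concretely, suppose first that $I(u_0)>0$. By Lemma \ref{le2.1}(ii)--(iii) we have $\lambda^*(u_0)>1$ and $J(\lambda u_0)$ is strictly increasing on $(0,\lambda^*(u_0)]$, so for any sequence $\lambda_k\nearrow 1^-$ the dilates $u_{0,k}:=\lambda_k u_0$ satisfy
\begin{equation*}
J(u_{0,k})<J(u_0)=d,\qquad I(u_{0,k})>0.
\end{equation*}
Theorem \ref{th3.1} then yields, for each $k$, a global weak solution $u_k$ with initial datum $u_{0,k}$, lying in $W$ for all $t\geq 0$ and obeying the uniform bounds \eqref{e1}--\eqref{e3} with $J(u_{0,k})<d$ in place of $d$.

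Next I would run the same compactness argument as in Step 1 of Theorem \ref{th3.1}. Extract a subsequence with $u_k\rightharpoonup u$ weakly-$*$ in $L^\infty(0,T;H_0^1(\Omega))$, $u_{k,t}\rightharpoonup u_t$ weakly in $L^2(0,T;L^2(\Omega))$, $u_k\to u$ strongly in $L^2(\Omega\times(0,T))$ by Aubin--Lions, and $|u_k|^{q-1}u_k\rightharpoonup |u|^{q-1}u$ weakly in $L^{(q+1)/q}$. The main obstacle --- the very one flagged in the introduction --- is passing to the limit in the nonlocal coefficient $b\|\nabla u_k\|_2^2$, for only weak convergence of $\nabla u_k$ is initially available. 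This is dispatched by exactly the device used in Theorem \ref{th3.1}: test the limiting equation against $u$, test the approximating equations against $u_k$, subtract, and invoke the $S_+$ property of $L$ (Lemma \ref{lem-nonlocal}) to upgrade to $\nabla u_k\to\nabla u$ strongly in $L^2(\Omega)$. With strong $H_0^1$ convergence in hand, the signs $I(u_k)>0$ pass to $I(u)\geq 0$ in the limit, and $u$ is a global weak solution of \eqref{1.1} with initial datum $u_0$ satisfying $I(u)\geq 0$.

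For the decay rate under the assumption $I(u)>0$: the energy identity \eqref{2.2} forces $J(u(\cdot))$ to be non-increasing, and $u_t\equiv 0$ on any interval would make $u$ a stationary solution of \eqref{1.1}, which upon testing against $u$ gives $I(u)=0$ --- a contradiction. Hence there exists $t_1>0$ with $J(u(t_1))<d$ while $I(u(t_1))>0$, and a restart of Theorem \ref{th3.1} from time $t_1$ supplies the exponential estimate $\|u(t)\|_2^2\leq C_1 e^{-C_2 t}$. The remaining case $I(u_0)=0$ splits into $u_0=0$, giving the trivial solution $u\equiv 0$ (which vanishes), and $u_0\neq 0$, in which case $u_0\in\mathcal{N}$ with $J(u_0)=d$ so that $u_0$ is a ground state and hence a stationary weak solution. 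To obtain the claimed companion solution that vanishes in finite time one takes a different approximation --- for instance a sequence in $W$ accumulating at $u_0$ along the decaying branch --- and exploits the uniqueness of bounded solutions in Theorem \ref{th3.1} to separate it from the stationary one; this delicate non-uniqueness construction, together with the $S_+$ step above, is what I expect to be the real work of the proof.
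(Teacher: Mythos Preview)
Your global-existence step and the decay estimate match the paper's proof closely: perturb $u_0$ to $\lambda_k u_0$ with $\lambda_k\nearrow 1$, apply Theorem~\ref{th3.1}, extract a limit via compactness and the $S_+$ property of Lemma~\ref{lem-nonlocal}, and for the decay restart from a time at which the energy has dropped strictly below $d$. One simplification you miss: the paper treats $I(u_0)\geq 0$ uniformly in the approximation, since Lemma~\ref{le2.1} gives $\lambda^*(u_0)\geq 1$ in either case, so $I(\lambda_k u_0)>0$ and $J(\lambda_k u_0)<d$ for every $\lambda_k<1$; there is no need to split off $I(u_0)=0$ at this stage.

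The genuine gap is in your handling of the vanishing alternative. The theorem's dichotomy is on whether the \emph{constructed} solution satisfies $I(u(t))>0$ for all $t>0$, not on the sign of $I(u_0)$; you have conflated the two and ended up proposing a ``delicate non-uniqueness construction'' that is neither needed nor carried out. The paper's argument is direct and much simpler: if $I(u(t))>0$ on $(0,t_0)$ and $I(u(t_0))=0$ at some first time $t_0>0$, then $\tfrac{1}{2}\tfrac{d}{dt}\|u\|_2^2=-I(u)<0$ on $(0,t_0)$ forces $u_t\not\equiv 0$ there, so \eqref{2.2} gives $J(u(t_0))<d$. But $I(u(t_0))=0$ together with $J(u(t_0))<d=\inf_{\mathcal N}J$ rules out $u(t_0)\in\mathcal N$, hence $\|\nabla u(t_0)\|_2=0$, i.e.\ $u(t_0)=0$; extending $u$ by zero for $t>t_0$ furnishes the vanishing solution. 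No ground-state analysis, no second approximation, and no appeal to uniqueness is required.
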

\begin{proof}
Let $\lambda_k=1-\frac{1}{k}$ $k=1, 2,\cdots$. Consider the following initial and boundary value problem
\begin{eqnarray}\label{4.1}
\begin{cases}
u_t-(a+b\|\nabla u\|_2^2)\triangle u=|u|^{q-1}u, \ & (x,t)\in \Omega\times(0,T),\\
u=0, \ & (x,t)\in \partial \Omega\times(0,T),\\
u(x, 0)=\lambda_k u_0, \ & x\in\Omega.
\end{cases}
\end{eqnarray}
Since $I(u_0)\geq0$, $q>3$, we can deduce that there exists a unique $\lambda^*=\lambda^*(u_0)\geq1$ such that $I(\lambda^* u_0)=0$. And then from $\lambda_k<1\leq\lambda^*$, we get $I(u_0^k)=I(\lambda_k u_0)>0$
and $J(u_0^k)=J(\lambda_k u_0)<J(u_0)=d$. In view of Theorem \ref{th3.1}, it follows that for each $k$ Problem \eqref{4.1}
admits a global weak solution $u^k$ satisfying $u^k\in L^{\infty}(0,\infty; H_0^1(\Omega))$, $u^k_t\in L^2(0,\infty; L^2(\Omega))$,
$u^k\in W$ for $0<t<\infty$ and $\int_0^t\|u^k_{\tau}\|_2^2d\tau+J(u^k)=J(u_0^k)<d$.
Applying the arguments similar to those in Theorem \ref{th3.1} we see that there exist a subsequence of $\{u^k\}$ and a function $u$, such
that $u$ is the weak solution of Problem \eqref{1.1} with $I(u)\geq0$ and $J(u)\leq d$
for $0<t<\infty$.

Next, Let us consider the asymptotic behavior. First, suppose that $I(u)>0$ for $0<t<\infty$, then $u(x, t)$ does not vanish in finite time.
Taking $\phi=u$ in \eqref{2.1}, we have
\begin{eqnarray*}
\frac{1}{2}\frac{d}{dt}\|u\|_2^2=\int_{\Omega}u_tu\mathrm{d}x=-I(u)<0,
\end{eqnarray*}
which implies that $u_t\not\equiv0$. Therefore, by \eqref{2.2} there exists a $t_0>0$ such that
\begin{eqnarray*}
0<J(u(t_0))=d-\int_0^{t_0}\|u_{\tau}\|_2^2\mathrm{d}\tau=d_1<d.
\end{eqnarray*}
Choosing $t=t_0$ as the initial time and  by Lemma \ref{le2.5}, we get that $u\in W_{\delta}$
for $\delta_1<\delta<\delta_2$ and $t>t_0$, where $\delta_1<1<\delta_2$ are the two roots of $d(\delta)=d_1$.
Hence, $I_{\delta_1}(u)\geq0$ for $t>t_0$ and
\begin{eqnarray*}
\frac{1}{2}\frac{d}{dt}\|u\|_2^2=-I(u)=a(\delta_1-1)\|\nabla u\|_2^2+b(\delta_1-1)\|\nabla u\|_2^4-I_{\delta_1}(u)\leq a\lambda_1(\delta_1-1)\|u\|_2^2.
\end{eqnarray*}
Therefore,
\begin{eqnarray*}
\|u\|_2^2\leq\|u(t_0)\|_2^2e^{-2a\lambda_1(1-\delta_1)(t-t_0)}.
\end{eqnarray*}
That is $\|u\|_2^2\leq C_1e^{-C_2t}$.

Next, suppose $I(u)>0$ for $0<t<t_0$ and $I(u(x, t_0))=0$. Obviously, $u_t\not\equiv0$ for $0<t<t_0$
and $\int_0^{t_0}\|u_{\tau}\|_2^2\mathrm{d}\tau>0$. Recalling \eqref{2.2}, we have
\begin{eqnarray*}
J(u(t_0))=d-\int_0^{t_0}\|u_{\tau}\|_2^2\mathrm{d}\tau=d_1<d.
\end{eqnarray*}
By the definition of $d$, we know $\|\nabla u(t_0)\|_2^2=0$, which implies $u(t_0)=0$.
Let $u(x,t)\equiv0$ for all $t>t_0$, then it is seen that $u(x, t)$ is a weak solution of \eqref{1.1} that vanishes in finite time.
The proof is complete.
\end{proof}

\begin{theorem}\label{th4.2}
Assume $a, b>0$, $3<q\leq2^*-1$, and $u$ is the weak solution of Problem \eqref{1.1} with $u_0\in H_0^1(\Omega)$. If $J(u_0)=d$
and $I(u_0)<0$, then there exists a finite time $T$ such that $u$ blows up at $T$.
\end{theorem}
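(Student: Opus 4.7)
My plan is to reduce the critical case $J(u_0)=d$ to the subcritical one already handled by Theorem \ref{th3.2}. The observation driving the proof is that the energy identity \eqref{2.2} strictly lowers $J(u(t))$ below $d$ as soon as $u_t\not\equiv 0$, so if I can locate a time $t_0>0$ at which both $J(u(t_0))<d$ and $I(u(t_0))<0$ hold, then $u$ restricted to $[t_0,T)$ is a weak solution of Problem \eqref{1.1} with initial data satisfying the hypotheses of Theorem \ref{th3.2}, and the finite-time blow-up follows.

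First I would observe, by testing \eqref{2.1} with $\phi=u$, that
\begin{equation*}
\frac{1}{2}\frac{d}{dt}\|u\|_2^2=-I(u).
\end{equation*}
Since $I(u_0)<0$ and $I\circ u$ is continuous in $t$, this quantity is strictly positive on some initial interval, so $\|u\|_2^2$ is strictly increasing near $t=0$. In particular $u$ cannot be constant in $t$ on any interval $[0,t_0]$ with $t_0>0$, and so the energy dissipation term satisfies $\int_0^{t_0}\|u_\tau\|_2^2\,\mathrm{d}\tau>0$ for every $t_0>0$.

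The heart of the argument is to show that $I(u(t))<0$ on the whole maximal existence interval $[0,T)$. Suppose for contradiction that $t_1:=\inf\{t>0:\,I(u(t))=0\}$ is finite. Then $I(u(t_1))=0$ while $I(u(t))<0$ on $[0,t_1)$, so Lemma \ref{le2.2}$\mathrm{(ii)}$ applied with $\delta=1$ yields $\|\nabla u(t)\|_2>r(1)$ on $[0,t_1)$, and by continuity $\|\nabla u(t_1)\|_2\geq r(1)>0$. Hence $u(t_1)\in\mathcal{N}$, which forces $J(u(t_1))\geq d$ by the Nehari characterization of the depth. But \eqref{2.2} combined with the previous paragraph gives
\begin{equation*}
J(u(t_1))=d-\int_0^{t_1}\|u_\tau\|_2^2\,\mathrm{d}\tau<d,
\end{equation*}
a contradiction.

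With $I(u(t))<0$ established on $[0,T)$, picking any $t_0>0$ produces initial data $u(\cdot,t_0)$ with $J(u(t_0))<d$ and $I(u(t_0))<0$; applying Theorem \ref{th3.2} to the translated weak solution $t\mapsto u(\cdot,t_0+t)$ then delivers the finite-time blow-up claimed in the statement. The main obstacle is precisely the invariance step above: one must exclude that the trajectory touches the Nehari manifold while $J$ is still equal to $d$, and this is exactly where the strict monotonicity of $\|u\|_2^2$ coming from $-2I(u)$, the uniform lower bound of Lemma \ref{le2.2}$\mathrm{(ii)}$, and the variational definition $d=\inf_{\mathcal{N}}J$ cooperate to close the argument.
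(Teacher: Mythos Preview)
Your proof is correct and follows the same core strategy as the paper: exploit the energy identity \eqref{2.2} to drop the energy strictly below $d$ at some positive time $t_0$ while keeping $I(u(t_0))<0$, and then conclude via the subcritical blow-up result. The execution differs only in bookkeeping: you establish the global invariance $I(u(t))<0$ directly from the variational characterization $d=\inf_{\mathcal{N}}J$ and Lemma~\ref{le2.2}(ii), and then invoke Theorem~\ref{th3.2} as a black box, whereas the paper secures $I(u(t))<0$ only on a short initial interval by continuity, passes to $J(u(t_0))<d$, and then re-runs the concavity computation using the $V_\delta$-invariance of Lemma~\ref{le2.5}; your route is slightly more economical since it avoids reopening the $\delta$-family machinery.
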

\begin{proof}
In accordance with the proof of Theorem \ref{th3.2}, by a series of computation, we can get
\begin{eqnarray}\label{4.2}
&&M''(t)M(t)-\frac{q+1}{2}M'(t)^2 \nonumber\\
&\geq&a(q-1)\lambda_1M'(t)M(t)-2(q+1)J(u_0)M(t)-(q+1)\|u_0\|^2_2M'(t).
\end{eqnarray}
Since $J(u_0)=d$, $I(u_0)<0$, and by the continuity of $J(u)$ and $I(u)$ with respect to $t$, there exists a $t_0>0$
such that $J(u(x, t))>0$ and $I(u(x,t))<0$ for $0<t\leq t_0$. Then from $(u_t, u)=-I(u)$, we have $u_t\not\equiv0$
for $0<t\leq t_0$. We get
\begin{eqnarray*}
J(u(t_0))\leq d-\int_0^{t_0}\|u_{\tau}\|_2^2\mathrm{d}\tau=d_1<d.
\end{eqnarray*}
Similarly, choosing $t=t_0$ as the initial time and by Lemma \ref{le2.5}, we know that $u(x, t)\in V_{\delta}$
for $\delta_1<\delta<\delta_2$ and $t>t_0$, where $\delta_1<1<\delta_2$ are the two roots of the equation $d(\delta)=d_1$.
Therefore, we have $I_{\delta}(u)<0$ and $\|\nabla u\|_2>r(\delta)$ for $\delta_1<\delta<\delta_2$ and $t>t_0$. Thus,
$I_{\delta_2}(u)\leq0$ and $\|\nabla u\|_2\geq r(\delta_2)$ for $t>t_0$. Then for $t>t_0$ we get the following estimates
\begin{eqnarray*}
M''(t)&=&-2I(u)=2a(\delta_2-1)\|\nabla u\|_2^2+2b(\delta_2-1)\|\nabla u\|_2^4-2I_{\delta_2}I(u)\\
&\geq&2a(\delta_2-1)\|\nabla u\|_2^2\geq2a(\delta_2-1)r^2(\delta_2), \\
M'(t)&\geq&2a(\delta_2-1)r^2(\delta_2)t,\\
M(t)&\geq&a(\delta_2-1)r^2(\delta_2)t^2.
\end{eqnarray*}
Consequently, for sufficiently large $t$, we get from \eqref{4.2} that
\begin{eqnarray*}
&&M''(t)M(t)-\frac{q+1}{2}M'(t)^2\nonumber\\
&\geq&\Big(\frac{a(q-1)\lambda_1}{2}M(t)-(q+1)\|u_0\|_2^2\Big)M'(t)+\Big(\frac{a(q-1)\lambda_1}{2}M'(t)-2(q+1)d\Big)M(t)>0.
\end{eqnarray*}
The reminder of the proof is the same as that of Theorem \ref{th3.2}.
\end{proof}

\par
\section{$\mathrm{The \ case \ J(u_0)>d}$.}
\setcounter{equation}{0}

In this section, we investigate the conditions to ensure the existence of global or finite time blow-up solutions to
Problem \eqref{1.1} with $J(u_0)>d$.
Inspired by some ideas from \cite{Gazzola,RZXu2013}, where a class of semilinear parabolic and pseudo-parabolic equations were studied, respectively,
we give some sufficient conditions for the solutions to exist globally or not with arbitrarily high initial energy.
For this, set
\begin{eqnarray*}
&& \mathcal{N}_+=\{u\in H_0^1(\Omega)| \ I(u)>0\},\\
&& \mathcal{N}_-=\{u\in H_0^1(\Omega)| \ I(u)<0\},
\end{eqnarray*}
and the (open) sublevels of $J$
\begin{eqnarray*}
&& J^s=\{u\in H_0^1(\Omega)| \ J(u)<s\}.
\end{eqnarray*}
Furthermore, we define
\begin{eqnarray}\label{52.1}
&& \mathcal{N}_s=\mathcal{N}\cap J^s=\Big\{u\in \mathcal{N}\Big|\dfrac{a(q-1)}{2(q+1)}\|\nabla u\|_2^2+\frac{b(q-3)}{4(q+1)}\|\nabla u\|_2^4<s\Big\}.
\end{eqnarray}
By the definition of $d$, we see that for any $s>d$, $\mathcal{N}_s$ is nonempty.
For all $s>d$, set
\begin{equation}\label{52.2}
\lambda_s=\inf\{\|u\|_2\mid u\in \mathcal{N}_s\},\quad\Lambda_s=\sup\{\|u\|_2\mid u\in \mathcal{N}_s\}.
\end{equation}
It is clear that $\lambda_s$ is nonincreasing in $s$ and $\Lambda_s$ are nondecreasing in $s$.

Finally we introduce the following sets
\begin{eqnarray*}
&&\mathcal{B}=\{u_0\in H_0^1(\Omega)\mid \text{the solution} u=u(t)\ \text{of \eqref{1.1} blows up in finite time}\},\\
&&\mathcal{G}=\{u_0\in H_0^1(\Omega)\mid \text{the solution} u=u(t)\ \text{of \eqref{1.1} exists for all}\ t>0\},\\
&&\mathcal{G}_0=\{u_0\in H_0^1(\Omega)\mid u(t)\rightarrow 0\ \text{in}\ H_0^1(\Omega)\ \text{as}\ t\rightarrow\infty\}.
\end{eqnarray*}

The following two lemmas will be needed in the proof of the main results in this section.

\begin{lemma}\label{lem52.1} Let $3<q\leq 2^*-1$. Then

(i) $0$ is away from both $\mathcal{N}$ and $\mathcal{N}_-$, i.e. $dist(0,\mathcal{N})>0$, $dist(0,\mathcal{N}_-)>0$.

(ii) For any $s>0$, the set $J^s\cap \mathcal{N}_+$ is bounded in $H_0^1(\Omega)$.
\end{lemma}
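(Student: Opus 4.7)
The plan is to reduce both assertions to the two standard inequalities that have already been exploited in Lemma~\ref{depth} and Lemma~\ref{le2.1}: the Sobolev embedding $H_0^1(\Omega)\hookrightarrow L^{q+1}(\Omega)$ with optimal constant $S$ (which is available since $q+1\le 2^*$), and the algebraic identity
\[
J(u)=\frac{a(q-1)}{2(q+1)}\|\nabla u\|_2^2+\frac{b(q-3)}{4(q+1)}\|\nabla u\|_2^4+\frac{1}{q+1}I(u),
\]
which decomposes $J$ into a manifestly positive part (using $q>3$) plus a multiple of $I$.

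For part (i), on $\mathcal{N}$ the relation $I(u)=0$ gives $a\|\nabla u\|_2^2\le a\|\nabla u\|_2^2+b\|\nabla u\|_2^4=\|u\|_{q+1}^{q+1}\le S^{q+1}\|\nabla u\|_2^{q+1}$, and since $\|\nabla u\|_2\neq 0$ we may divide by $\|\nabla u\|_2^2$ to obtain $\|\nabla u\|_2\ge (a/S^{q+1})^{1/(q-1)}$, exactly the estimate already derived in Lemma~\ref{depth}. For $u\in\mathcal{N}_-$ the computation is almost identical: $I(u)<0$ combined with $\|\nabla u\|_2\neq 0$ (otherwise $I(u)=0$) gives $a\|\nabla u\|_2^2<\|u\|_{q+1}^{q+1}\le S^{q+1}\|\nabla u\|_2^{q+1}$, so the same uniform lower bound holds. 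Since $\|\cdot\|_{H_0^1(\Omega)}=\|\nabla\cdot\|_2$, this is precisely $\mathrm{dist}(0,\mathcal{N})>0$ and $\mathrm{dist}(0,\mathcal{N}_-)>0$. (Alternatively, for $\mathcal{N}_-$ one could invoke Lemma~\ref{le2.2}(ii) with $\delta=1$, which gives the sharper bound $\|\nabla u\|_2>r(1)$.)

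For part (ii), take any $u\in J^s\cap\mathcal{N}_+$. Since $I(u)>0$, the identity displayed above yields
\[
\frac{a(q-1)}{2(q+1)}\|\nabla u\|_2^2+\frac{b(q-3)}{4(q+1)}\|\nabla u\|_2^4<J(u)<s,
\]
where the coefficients in front of both powers of $\|\nabla u\|_2$ are strictly positive by the assumption $q>3$. Either of the two summands alone then yields an explicit upper bound on $\|\nabla u\|_2$ in terms of $s$, $a$, $b$, $q$, so $J^s\cap\mathcal{N}_+$ is bounded in $H_0^1(\Omega)$.

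There is no real obstacle here; the only point worth noting is that the hypothesis $q>3$ is essential in part (ii) to ensure that the $\|\nabla u\|_2^4$ coefficient $b(q-3)/(4(q+1))$ is positive, which is what prevents $I(u)>0$ from being compatible with arbitrarily large $\|\nabla u\|_2$ at bounded energy $J(u)<s$. Without the quartic term one would only control the $H_0^1$ norm up to the Nehari manifold direction, but the Kirchhoff term $b\|\nabla u\|_2^4$ provides exactly the extra coercivity needed.
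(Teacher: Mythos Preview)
Your proof is correct and follows essentially the same route as the paper. The only minor variation is in part (i) for $\mathcal{N}$: the paper argues via $d\le J(u)=\frac{a(q-1)}{2(q+1)}\|\nabla u\|_2^2+\frac{b(q-3)}{4(q+1)}\|\nabla u\|_2^4$ and the positivity of $d$, whereas you go directly through the Sobolev inequality (which is precisely how $d>0$ was established in Lemma~\ref{depth}); for $\mathcal{N}_-$ and for part (ii) your argument and the paper's coincide verbatim. One small remark on your closing commentary: the quartic Kirchhoff term is not actually what makes (ii) work---the quadratic term $\frac{a(q-1)}{2(q+1)}\|\nabla u\|_2^2$ already suffices on its own, and $q>3$ is needed only so that the quartic coefficient is nonnegative and can be dropped.
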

\begin{proof}
(i) For any $u\in \mathcal{N}$, by the definition of $d$ we have
\begin{eqnarray*}
d&\leq&\frac{a}{2}\|\nabla u\|_2^2+\frac{b}{4}\|\nabla u\|_2^4-\frac{1}{q+1}\|u\|^{q+1}_{q+1}\\
 &=&\frac{a}{2}\|\nabla u\|_2^2+\frac{b}{4}\|\nabla u\|_2^4-\frac{1}{q+1}\Big(a\|\nabla u\|_2^2+b\|\nabla u\|_2^4\Big)\\
 &=&\dfrac{a(q-1)}{2(q+1)}\|\nabla u\|_2^2+\dfrac{b(q-3)}{4(q+1)}\|\nabla u\|_2^4.
\end{eqnarray*}
Recalling that $q>3$, the above inequality implies that there exists a constant $c_0>0$ such that $dist(0,\mathcal{N})=\inf_{u\in \mathcal{N}}\|\nabla u\|_2\geq c_0$.

For any $u\in \mathcal{N}_-$, we have $\|\nabla u\|_2\neq0$. Then it follows that
$$a\|\nabla u\|_2^2<a\|\nabla u\|_2^2+b\|\nabla u\|_2^4<\|u\|^{q+1}_{q+1}\leq S^{q+1}\|\nabla u\|_2^{q+1},$$
which implies
$$\|\nabla u\|_2\geq\Big(\dfrac{a}{S^{q+1}}\Big)^{\frac{1}{q-1}}.$$
Here $S>0$ is given in Lemma \ref{depth}.
Therefore, $dist(0,\mathcal{N}_-)=\inf\limits_{u\in \mathcal{N}_-}\|\nabla u\|_2>0$.

(ii) For any $u\in J^s\cap \mathcal{N}_+$, it holds that $J(u)<s$ and $I(u)>0$. Therefore,
\begin{eqnarray*}
s>J(u)&=&\dfrac{a(q-1)}{2(q+1)}\|\nabla u\|_2^2+\dfrac{b(q-3)}{4(q+1)}\|\nabla u\|_2^4+\dfrac{1}{q+1}I(u)\\
&>&\dfrac{a(q-1)}{2(q+1)}\|\nabla u\|_2^2,
\end{eqnarray*}
which yields
$$\|\nabla u\|_2^2\leq \dfrac{2(q+1)s}{a(q-1)}.$$
The proof is complete.
\end{proof}

\begin{lemma}\label{lem52.2}
Let $3<q<2^*-1$. Then for any $s>d$, $\lambda_s$ and $\Lambda_s$ defined in \eqref{52.2} satisfy
\begin{equation}\label{52.3}
0<\lambda_s\leq\Lambda_s<+\infty.
\end{equation}
\end{lemma}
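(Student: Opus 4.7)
\medskip

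\noindent\textbf{Proof proposal.} The inequality $\lambda_s\le\Lambda_s$ is free: since $s>d$ the set $\mathcal N_s$ is nonempty (as observed after \eqref{52.1}), so any $u\in\mathcal N_s$ realises $\lambda_s\le\|u\|_2\le\Lambda_s$. The real content is therefore the two strict bounds.

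First, I would dispatch $\Lambda_s<+\infty$, which is essentially the same estimate already used in Lemma \ref{lem52.1}(ii). For any $u\in\mathcal N_s$ one has $I(u)=0$, hence
$$J(u)=\frac{a(q-1)}{2(q+1)}\|\nabla u\|_2^2+\frac{b(q-3)}{4(q+1)}\|\nabla u\|_2^4<s,$$
and since $q>3$ both coefficients are positive, so $\|\nabla u\|_2^2<\frac{2(q+1)s}{a(q-1)}$. Poincar\'e's inequality then yields a uniform bound on $\|u\|_2$ depending only on $s$, $a$, $q$ and $\lambda_1$, hence $\Lambda_s<+\infty$.

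The more delicate claim is $\lambda_s>0$. The plan is to combine the $H_0^1$-distance estimate $\mathrm{dist}(0,\mathcal N)\ge c_0>0$ from Lemma \ref{lem52.1}(i) with an interpolation inequality to convert it into an $L^2$ lower bound. Since $u\in\mathcal N$ gives $\|u\|_{q+1}^{q+1}=a\|\nabla u\|_2^2+b\|\nabla u\|_2^4\ge ac_0^2$, we already have a positive lower bound on $\|u\|_{q+1}$. Because the hypothesis reads $q<2^{*}-1$ (strictly), we have $q+1<2^{*}$, so the Gagliardo--Nirenberg inequality provides $\theta\in(0,1)$ and $C>0$ with
$$\|u\|_{q+1}\le C\,\|u\|_2^{\theta}\,\|\nabla u\|_2^{1-\theta}\qquad\forall\,u\in H_0^1(\Omega).$$
Raising to the power $q+1$, invoking the upper bound on $\|\nabla u\|_2$ established for $\Lambda_s$, and isolating $\|u\|_2$, I obtain
$$ac_0^2\le\|u\|_{q+1}^{q+1}\le C^{q+1}\|u\|_2^{\theta(q+1)}\Big(\tfrac{2(q+1)s}{a(q-1)}\Big)^{(1-\theta)(q+1)/2},$$
which yields a positive lower bound on $\|u\|_2$ depending only on $s$, $a$, $b$, $q$, $|\Omega|$ and the Sobolev/Gagliardo--Nirenberg constants. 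Taking the infimum over $u\in\mathcal N_s$ gives $\lambda_s>0$.

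The main obstacle is the lower bound step: a naive application of Sobolev gives only $\|\nabla u\|_2\ge c_0$, which is useless for an $L^2$ bound from below on its own. The interpolation inequality is exactly what bridges $L^{q+1}$, $L^2$ and $H_0^1$, and crucially it requires the strict inequality $q+1<2^{*}$ assumed in the lemma (indeed, in the borderline case $q=2^{*}-1$ the exponent $\theta$ collapses to $0$ and the argument fails, which explains why the lemma is stated with strict inequality while the earlier lemmas allowed $q=2^{*}-1$).
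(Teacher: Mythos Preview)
Your proof is correct and follows essentially the same strategy as the paper: Poincar\'e's inequality combined with the $H_0^1$ bound from \eqref{52.1} for $\Lambda_s<\infty$, and Gagliardo--Nirenberg together with both the lower bound $\|\nabla u\|_2\ge c_0$ from Lemma~\ref{lem52.1}(i) and the upper bound on $\|\nabla u\|_2$ for $\lambda_s>0$. The only cosmetic difference is that the paper writes the chain $a\|\nabla u\|_2^2<\|u\|_{q+1}^{q+1}\le C\|\nabla u\|_2^{n(q-1)/2}\|u\|_2^{\alpha}$ in one line and then argues that $\|\nabla u\|_2^{2-n(q-1)/2}$ is bounded away from zero regardless of the sign of the exponent, whereas you first extract $\|u\|_{q+1}^{q+1}\ge ac_0^2$ and then feed in the gradient upper bound; your route avoids the sign case-split and is arguably cleaner.
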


\begin{proof}
By the Gagliardo-Nirenberg inequality, we have
\begin{equation}\label{52.4}
\|u\|_{q+1}^{q+1}\leq C\|\nabla u\|^{n(q-1)/2}_2\|u\|^\alpha_2,\quad\forall\ u\in H_0^1(\Omega),
\end{equation}
where $C$ is a positive constant depending only on $n$ and $q$ and $\alpha=q+1-\dfrac{n(q-1)}{2}>0$ since $q<2^*-1$.
Therefore, for any $s>d$ and $u\in \mathcal{N}_s$, we have
\begin{equation}\label{52.5}
a\|\nabla u\|^{2}_2<\|u\|_{q+1}^{q+1}\leq C\|\nabla u\|^{n(q-1)/2}_2\|u\|^\alpha_2,
\end{equation}
which can be rewritten as
\begin{equation}\label{52.6}
\|\nabla u\|^{2-n(q-1)/2}_2\leq \dfrac{C}{a}\|u\|^\alpha_2.
\end{equation}
By combining Lemma \ref{lem52.1}(i) with \eqref{52.1} we see the left-hand side of \eqref{52.6} remains
bounded away from $0$ no matter what the sign of $2-n(q-1)/2$ is. This proves $\lambda_s>0$
by the definition of $\lambda_s$. Moreover, the fact that $\Lambda_s<\infty$ just follows from \eqref{52.1}
and Poincar\'{e}'s inequality $\|u\|_2\leq C_*\|\nabla u\|_2$. The proof is compete.
\end{proof}

\begin{remark}
The condition that $q<2^*-1$ is only required when showing the positivity of $\lambda_s$ for $s>d$.
\end{remark}

To give some sufficient conditions for the existence of global and blow-up solutions for supercritical initial energy, 
denote by $T(u_0)$ the maximal existence time of the solutions to Problem \eqref{1.1}
with initial datum $u_0$. If the solution is global, i.e. $T(u_0)=\infty$, we denote by
$$\omega(u_0)=\bigcap_{t\geq0}\overline{\{u(s):\ s\geq t\}}^{H_0^1(\Omega)}$$
the $\omega$-limit set of $u_0$. The main result of this section is the following

\begin{theorem}\label{th5.1}
Let $3<q<2^*-1$. Assume that $J(u_0)>d$, then the following statements hold

(i) If $u_0\in \mathcal{N}_+$ and $\|u_0\|_2\leq\lambda_{J(u_0)}$,
then $u_0\in\mathcal{G}_0$;

(ii) If $u_0\in \mathcal{N}_-$ and $\|u_0\|_2\geq\Lambda_{J(u_0)}$,
then $u_0\in\mathcal{B}$.
\end{theorem}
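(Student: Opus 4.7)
The plan is to prove both parts by first establishing the invariance of the sets $\mathcal{N}_+$ and $\mathcal{N}_-$ under the flow generated by \eqref{1.1}, and then deducing the respective asymptotic behavior (decay for (i), finite-time blow-up for (ii)). The two key evolution identities to exploit throughout are $\frac{1}{2}\frac{d}{dt}\|u\|_2^2 = -I(u)$ (obtained by testing \eqref{2.1} with $\phi=u$) and the strict monotonicity of $J(u(t))$, which follows from \eqref{2.2} whenever $u_t\not\equiv 0$.

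For part (i), I first argue that $u(t)\in\mathcal{N}_+$ for all $t\in[0,T(u_0))$ as long as $u(t)\neq 0$. Suppose by contradiction $t_0$ is the first exit time; by continuity $I(u(t_0))=0$, and if $u(t_0)\neq 0$ then $u(t_0)\in\mathcal{N}$. Since $I(u)>0$ on $[0,t_0)$, the identity $\frac{d}{dt}\|u\|_2^2=-2I(u)<0$ forces $\|u(t_0)\|_2<\|u_0\|_2\leq \lambda_{J(u_0)}$, and since $u_t\not\equiv 0$ on $[0,t_0]$ (because $(u_t,u)=-I(u)<0$), \eqref{2.2} gives $J(u(t_0))<J(u_0)$. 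Therefore $u(t_0)\in\mathcal{N}_{J(u_0)}$, so by the definition of $\lambda_s$ one has $\|u(t_0)\|_2\geq\lambda_{J(u_0)}$, a contradiction. With invariance in hand, Lemma \ref{lem52.1}(ii) bounds $\|\nabla u(t)\|_2$ uniformly in $t$ (since $J(u(t))\leq J(u_0)$ and $u(t)\in\mathcal{N}_+$), which yields global existence by a standard continuation argument and the uniqueness of bounded solutions from Theorem \ref{th3.1}. To get $u_0\in\mathcal{G}_0$, I would analyze the $\omega$-limit set: any $v\in\omega(u_0)$ is a stationary solution, hence $v=0$ or $v\in\mathcal{N}$. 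In the latter case $J(v)=\lim_{t\to\infty}J(u(t))\leq J(u(1))<J(u_0)$, so $v\in\mathcal{N}_{J(u_0)}$ and $\|v\|_2\geq\lambda_{J(u_0)}$; but $\|u(t)\|_2$ is strictly decreasing for $t>0$, giving $\|v\|_2\leq\|u(1)\|_2<\|u_0\|_2\leq\lambda_{J(u_0)}$, a contradiction. Hence $\omega(u_0)=\{0\}$ and $u(t)\to 0$ in $H_0^1(\Omega)$.

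For part (ii), I would mirror the above: show $u(t)\in\mathcal{N}_-$ for all $t\in[0,T(u_0))$. If $t_0$ were a first exit time, then $I(u(t_0))=0$; since $\frac{d}{dt}\|u\|_2^2=-2I(u)>0$ on $[0,t_0)$, $\|u(t_0)\|_2>\|u_0\|_2\geq\Lambda_{J(u_0)}>0$, ruling out $u(t_0)=0$ and forcing $u(t_0)\in\mathcal{N}$. Strict decrease of $J$ gives $u(t_0)\in\mathcal{N}_{J(u_0)}$, so $\|u(t_0)\|_2\leq\Lambda_{J(u_0)}$, contradiction. Consequently $\|u(t)\|_2\geq \|u_0\|_2\geq\Lambda_{J(u_0)}$ for every $t$ in the existence interval. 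I would then replicate the concavity scheme of Theorem \ref{th3.2}: set $M(t)=\int_0^t\|u\|_2^2\,\mathrm{d}\tau$, combine $M''=-2I(u)$ with the identity $-2I(u)+2(q+1)J(u)=a(q-1)\|\nabla u\|_2^2+\frac{b(q-3)}{2}\|\nabla u\|_2^4$ and Poincar\'{e}'s inequality, and use \eqref{2.2} together with Cauchy--Schwartz to arrive at
\begin{equation*}
M''(t)M(t)-\tfrac{q+1}{2}(M'(t))^2\geq a(q-1)\lambda_1 M'(t)M(t)-2(q+1)J(u_0)M(t)-(q+1)\|u_0\|_2^2 M'(t).
\end{equation*}
Finally invoke Levine's concavity lemma to conclude $T(u_0)<\infty$.

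The main obstacle is the last step of part (ii): unlike the subcritical case $J(u_0)<d$, here $J(u_0)>d>0$ is a large positive number, so the right-hand side of the concavity estimate is not immediately positive. The delicate point is to show that $M'(t)$ and $M(t)$ grow fast enough so that the term $a(q-1)\lambda_1 M'M$ eventually dominates the linear terms in $M$ and $M'$. This will rest on ruling out the possibility that $M'(t)$ stays bounded: if $\lim_{t\to\infty}M'(t)=L<\infty$ then $\|u(t)\|_2^2\to L$ and $-I(u(t))\to 0$, while invariance forces $\|u(t)\|_2\geq\Lambda_{J(u_0)}$; one can then derive a contradiction by comparing the limit of $M''(t)$ with the differential inequality $M''(t)\geq a(q-1)\lambda_1 M'(t)-2(q+1)J(u_0)$, thereby showing $M'(t)\to\infty$ and that the concavity inequality holds for all sufficiently large $t$, which is enough to invoke Levine's argument and complete the proof.
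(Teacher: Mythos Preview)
Your invariance arguments for both parts, and the $\omega$-limit set argument for part (i), are essentially the same as the paper's proof. The divergence is in how you conclude blow-up in part (ii).

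The paper does \emph{not} attempt a concavity argument for (ii). Instead, once invariance of $\mathcal{N}_-$ and the strict growth of $\|u(t)\|_2$ are established, it argues by contradiction via the $\omega$-limit set exactly as in part (i): if $T(u_0)=\infty$, then any $\omega\in\omega(u_0)$ satisfies $\|\omega\|_2>\Lambda_{J(u_0)}$ and $J(\omega)\le J(u_0)$, so $\omega\notin\mathcal{N}$; hence $\omega(u_0)=\{0\}$, which contradicts $\mathrm{dist}(0,\mathcal{N}_-)>0$ from Lemma~\ref{lem52.1}(i) since the orbit lies in $\mathcal{N}_-$. This completely sidesteps the difficulty you identify.

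Your concavity route, as written, has a genuine gap. You want to rule out $M'(t)\to L<\infty$ by passing to the limit in $M''(t)\ge a(q-1)\lambda_1 M'(t)-2(q+1)J(u_0)$ and obtaining a contradiction with $L\ge\Lambda_{J(u_0)}^2$. But for every $u\in\mathcal{N}_{J(u_0)}$ one has, by \eqref{52.1} and Poincar\'e,
\[
\|u\|_2^2\le\frac{1}{\lambda_1}\|\nabla u\|_2^2<\frac{2(q+1)J(u_0)}{a(q-1)\lambda_1},
\]
so in fact $\Lambda_{J(u_0)}^2\le \dfrac{2(q+1)J(u_0)}{a(q-1)\lambda_1}$ always holds, and the inequality $a(q-1)\lambda_1 L\le 2(q+1)J(u_0)$ is \emph{not} a contradiction. (There is also the preliminary issue that $M'\to L$ does not by itself force $M''\to 0$.) Thus your mechanism for showing $M'(t)\to\infty$ fails, and without it the right-hand side of your concavity estimate cannot be made positive. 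The $\omega$-limit set argument is the missing idea; alternatively, one can observe that if the solution were global and $\int_0^\infty\|u_\tau\|_2^2\,d\tau=\infty$ then eventually $J(u(t_1))<d$ with $I(u(t_1))<0$, reducing to Theorem~\ref{th3.2}, while finiteness of that integral puts one back into an equilibrium/$\omega$-limit analysis anyway.
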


\begin{proof}
(i) Assume that $u_0\in \mathcal{N}_+$ satisfying $\|u_0\|_2\leq \lambda_{J(u_0)}$. We first claim that
$u(t)\in \mathcal{N}_+$ for all $t\in[0,T(u_0))$. If not, there exists $t_0\in(0,T(u_0))$ such that $u(t)\in \mathcal{N}_+$
for $0\leq t<t_0$ and $u(t_0)\in \mathcal{N}$. On the other hand, it follows from \eqref{2.2} that $J(u(t_0))\leq J(u_0)$,
which implies that $u(t_0)\in J^{J(u_0)}$. Therefore, $u(t_0)\in \mathcal{N}_{J(u_0)}$. According to the definition of $\lambda_{J(u_0)}$,
we have
\begin{equation}\label{53.1}
\|u(t_0)\|_2\geq \lambda_{J(u_0)}.
\end{equation}
Taking $\phi=u$ in \eqref{2.1}, we get
\begin{eqnarray}\label{53.2}
\frac{1}{2}\frac{d}{dt}\|u\|_2^2=(u_t, u)=-a\|\nabla u\|_2^2-b\|\nabla u\|_2^4+\|u\|_{q+1}^{q+1}=-I(u).
\end{eqnarray}
Recalling that $I(u(t))>0$ for $t\in[0,t_0)$, we obtain from \eqref{53.2} that
\begin{equation*}
\|u(t_0)\|_2<\|u_0\|_2\leq \lambda_{J(u_0)},
\end{equation*}
which is contradictive with \eqref{53.1}. So $u(t)\in \mathcal{N}_+$ and this in turn implies that $u(t)\in J^{J(u_0)}$ for all $t\in[0,T(u_0))$.
Lemma \ref{lem52.1} (ii) shows that the orbit $\{u(t)\}$ remains bounded in $H_0^1(\Omega)$ for $t\in[0,T(u_0))$ so that $T(u_0)=\infty$.
Let $\omega$ ba an arbitrary element in $\omega(u_0)$, then by \eqref{2.2} and \eqref{53.2} we have
$$\|\omega\|_2<\lambda_{J(u_0)},\quad J(\omega)\leq J(u_0),$$
which, recalling the definition of $\lambda_{J(u_0)}$ again, implies $\omega(u_0)\cap \mathcal{N}=\emptyset$.
Therefore, $\omega(u_0)=\{0\}$, i.e. $u_0\in\mathcal{G}_0$.

(ii) Assume that $u_0\in \mathcal{N}_-$ with $\|u_0\|_2\geq \Lambda_{J(u_0)}$. We first claim that
$u(t)\in \mathcal{N}_-$ for all $t\in[0,T(u_0))$. If not, there exists $t^0\in(0,T(u_0))$ such that $u(t)\in \mathcal{N}_-$
for $0\leq t<t^0$ and $u(t^0)\in \mathcal{N}$. Noticing \eqref{2.2} we have $J(u(t^0))\leq J(u_0)$,
which implies that $u(t^0)\in J^{J(u_0)}$. Therefore, $u(t^0)\in \mathcal{N}_{J(u_0)}$.
According to the definition of $\Lambda_{J(u_0)}$,
we have
\begin{equation}\label{53.3}
\|u(t^0)\|_2\leq \Lambda_{J(u_0)}.
\end{equation}
On the other hand, from \eqref{53.2} and the fact that $I(u(t))<0$ for $t\in[0,t^0)$, we get
\begin{equation*}
\|u(t^0)\|_2>\|u_0\|_2\geq \Lambda_{J(u_0)},
\end{equation*}
a contradiction with \eqref{53.3}.

If $T(u_0)=\infty$, then for every $\omega\in\omega(u_0)$, it follows from \eqref{2.2} and \eqref{53.2}
that
\begin{equation}\label{53.4}
\|\omega\|_2>\Lambda_{J(u_0)},\quad J(\omega)\leq J(u_0).
\end{equation}
Combining \eqref{3.4} with the definition of $\Lambda_{J(u_0)}$ again, we obtain  $\omega(u_0)\cap \mathcal{N}=\emptyset$.
Thus, it must hold that $\omega(u_0)=\{0\}$, which is contradictive with Lemma \ref{lem52.1}(i).
Hence, $T(u_0)<\infty$ and the proof is complete.
\end{proof}

Theorem \ref{th5.1} (ii) implies that there exists $u_0$ such that $J(u_0)$ is arbitrarily large,
while the corresponding solution $u(x,t)$ of Problem \eqref{1.1} with $u_0$ as initial datum blows up in finite time.
To illustrate this, we need the following proposition.

\begin{proposition}\label{pro5.1}
Let $3<q\leq 2^*-1$ and $J(u_0)>d$.
If $\dfrac{4(q+1)}{q-3}|\Omega|^{\frac{q-1}{2}}J(u_0)\leq \|u_0\|_2^{q+1}$, then $u_0\in \mathcal{N}_-\cap\mathcal{B}$.
\end{proposition}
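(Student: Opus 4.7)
The plan is to reduce Proposition \ref{pro5.1} to Theorem \ref{th5.1}(ii), so I need to verify the two inputs of that theorem: (a) $u_0\in \mathcal{N}_-$ and (b) $\|u_0\|_2\geq \Lambda_{J(u_0)}$. Both will follow from the same key estimate: for every $u\in H_0^1(\Omega)$ with $I(u)\geq 0$,
\begin{equation*}
\|u\|_2^{q+1}\leq \frac{4(q+1)}{q-3}\,|\Omega|^{\frac{q-1}{2}}J(u),
\end{equation*}
with strict inequality whenever $u\not\equiv 0$. This single inequality is the engine of the whole argument.

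To derive the key estimate I would combine two ingredients. First, H\"older's inequality on the bounded domain $\Omega$ gives $\|u\|_2^{q+1}\leq |\Omega|^{\frac{q-1}{2}}\|u\|_{q+1}^{q+1}$, and this inequality is strict for any $u\in H_0^1(\Omega)\setminus\{0\}$ because such a $u$ cannot be a.e.\ equal to a constant (it has zero trace). Second, $I(u)\geq 0$ yields $\|u\|_{q+1}^{q+1}\leq a\|\nabla u\|_2^2+b\|\nabla u\|_2^4$, which I would rewrite as
\begin{equation*}
\frac{2(q+1)}{q-1}\cdot\frac{a(q-1)}{2(q+1)}\|\nabla u\|_2^2+\frac{4(q+1)}{q-3}\cdot\frac{b(q-3)}{4(q+1)}\|\nabla u\|_2^4\leq\frac{4(q+1)}{q-3}J(u),
\end{equation*}
using $q>3$, which guarantees both coefficients are positive and $\frac{4(q+1)}{q-3}>\frac{2(q+1)}{q-1}$. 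Chaining these two estimates produces the claim.

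With the key estimate in hand, part (a) is immediate: since $J(u_0)>d>0$ forces $u_0\not\equiv 0$, the assumption $I(u_0)\geq 0$ would produce the strict bound $\|u_0\|_2^{q+1}<\frac{4(q+1)}{q-3}|\Omega|^{\frac{q-1}{2}}J(u_0)$, contradicting the hypothesis $\frac{4(q+1)}{q-3}|\Omega|^{\frac{q-1}{2}}J(u_0)\leq\|u_0\|_2^{q+1}$; hence $I(u_0)<0$ and $u_0\in\mathcal{N}_-$. For part (b), I would take any $u\in\mathcal{N}_{J(u_0)}$; by definition $I(u)=0$ and $J(u)<J(u_0)$, so the key estimate and the hypothesis combine to give
\begin{equation*}
\|u\|_2^{q+1}\leq \frac{4(q+1)}{q-3}|\Omega|^{\frac{q-1}{2}}J(u)<\frac{4(q+1)}{q-3}|\Omega|^{\frac{q-1}{2}}J(u_0)\leq \|u_0\|_2^{q+1}.
\end{equation*}
Taking the supremum over $\mathcal{N}_{J(u_0)}$ yields $\Lambda_{J(u_0)}\leq\|u_0\|_2$, and Theorem \ref{th5.1}(ii) then gives $u_0\in\mathcal{B}$.

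I do not anticipate a genuine obstacle here; the whole proof is a careful bookkeeping of the coefficient $\frac{4(q+1)}{q-3}$ together with H\"older's inequality. The only subtlety worth flagging is that Theorem \ref{th5.1} is stated under $q<2^*-1$, which is used only through Lemma \ref{lem52.2} to guarantee $\Lambda_{J(u_0)}<\infty$; the explicit bound $\Lambda_{J(u_0)}\leq\|u_0\|_2$ derived above supplies that finiteness for free, so the Theorem \ref{th5.1}(ii) argument applies verbatim, and thus covers the endpoint case $q=2^*-1$ allowed by the proposition.
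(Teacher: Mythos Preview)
Your proof is correct and follows essentially the same route as the paper: reduce to Theorem \ref{th5.1}(ii) by verifying $I(u_0)<0$ and $\|u_0\|_2\geq\Lambda_{J(u_0)}$, both via H\"older's inequality combined with the identity $J(u)=\frac{a(q-1)}{2(q+1)}\|\nabla u\|_2^2+\frac{b(q-3)}{4(q+1)}\|\nabla u\|_2^4+\frac{1}{q+1}I(u)$. The paper carries out the two verifications separately, whereas you extract a single ``key estimate'' valid for all $u$ with $I(u)\geq 0$ and apply it twice; this is a cleaner packaging but the same content. Your final remark correctly resolves the apparent mismatch between the hypothesis $q\leq 2^*-1$ in the proposition and $q<2^*-1$ in Theorem \ref{th5.1}: the explicit bound $\Lambda_{J(u_0)}\leq\|u_0\|_2$ replaces the appeal to Lemma \ref{lem52.2}, and the proof of Theorem \ref{th5.1}(ii) then goes through unchanged.
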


\begin{proof}
By using H\"{o}lder's inequality we obtain from $\dfrac{4(q+1)}{q-3}|\Omega|^{\frac{q-1}{2}}J(u_0)\leq \|u_0\|_2^{q+1}$
that
\begin{equation}\label{53.5}
\dfrac{4(q+1)}{q-3}|\Omega|^{\frac{q-1}{2}}J(u_0)\leq \|u_0\|_2^{q+1}\leq\|u_0\|_{q+1}^{q+1}|\Omega|^{\frac{q-1}{2}}.
\end{equation}
By combining the expression of $J(u_0)$, $I(u_0)$ with \eqref{53.5} we have
\begin{eqnarray*}
J(u_0)&=&\frac{a}{2}\|\nabla u_0\|_2^2+\frac{b}{4}\|\nabla u_0\|_2^4-\frac{1}{q+1}\|u_0\|^{q+1}_{q+1},\\
&=&\frac{a}{4}\|\nabla u_0\|_2^2+(\frac{1}{4}-\frac{1}{q+1})\|u_0\|^{q+1}_{q+1}+\frac{1}{4}I(u_0)\\
&>&\dfrac{q-3}{4(q+1)}\|u_0\|^{q+1}_{q+1}+\frac{1}{4}I(u_0)\\
&\geq&J(u_0)+\frac{1}{4}I(u_0),
\end{eqnarray*}
which shows that $I(u_0)<0$, i.e. $u_0\in \mathcal{N}_-$.

To show that $u_0\in \mathcal{B}$, we need only to prove that $\|u_0\|_2\geq\Lambda_{J(u_0)}$ by Theorem \ref{th5.1}.
For this, $\forall\ u\in \mathcal{N}_{J(u_0)}$, we have
\begin{eqnarray*}
\|u\|_2^{q+1}&\leq&|\Omega|^{\frac{q-1}{2}}\|u\|_{q+1}^{q+1}=|\Omega|^{\frac{q-1}{2}}(a\|\nabla u\|_2^2+b\|\nabla u\|_2^4)\\
&=&|\Omega|^{\frac{q-1}{2}}\dfrac{4(q+1)}{q-3}\Big\{(\dfrac{1}{4}-\dfrac{1}{q+1})a\|\nabla u\|_2^2+(\dfrac{1}{4}-\dfrac{1}{q+1})b\|\nabla u\|_2^4\Big\}\\
&\leq&|\Omega|^{\frac{q-1}{2}}\dfrac{4(q+1)}{q-3}\Big\{(\dfrac{1}{2}-\dfrac{1}{q+1})a\|\nabla u\|_2^2+(\dfrac{1}{4}-\dfrac{1}{q+1})b\|\nabla u\|_2^4\Big\}\\
&<&|\Omega|^{\frac{q-1}{2}}\dfrac{4(q+1)}{q-3}J(u_0)\leq \|u_0\|_2^{q+1}.
\end{eqnarray*}
Taking supermum over $\mathcal{N}_{J(u_0)}$ we obtain
$$\Lambda^{q+1}_{J(u_0)}\leq |\Omega|^{\frac{q-1}{2}}\dfrac{4(q+1)}{q-3}J(u_0)\leq\|u_0\|_2^{q+1},$$
i.e. $\|u_0\|_2\geq\Lambda_{J(u_0)}$. Therefore, $u_0\in \mathcal{N}_-\cap\mathcal{B}$. The proof is complete.
\end{proof}

\begin{theorem}\label{th5.2}
For any $M>d$, there exists $u_M\in \mathcal{N}_-$ such that $J(u_M)\geq M$ and $u_M\in \mathcal{B}$.
\end{theorem}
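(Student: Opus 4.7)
The strategy is to reduce to Proposition \ref{pro5.1}: it suffices to construct, for every $M > d$, a function $u_M \in H_0^1(\Omega)$ with $J(u_M) \geq M$ satisfying
\begin{equation*}
\frac{4(q+1)}{q-3}|\Omega|^{(q-1)/2}J(u_M) \leq \|u_M\|_2^{q+1}.
\end{equation*}
Abbreviate $C_* := \frac{4(q+1)}{q-3}|\Omega|^{(q-1)/2}$. The main obstruction is that for every $v \in \mathcal{N}$ one has
\begin{equation*}
\|v\|_2^{q+1} \leq |\Omega|^{(q-1)/2}\|v\|_{q+1}^{q+1} < C_* J(v),
\end{equation*}
by H\"{o}lder's inequality together with the Nehari identity $\|v\|_{q+1}^{q+1}=a\|\nabla v\|_2^2+b\|\nabla v\|_2^4$ and $q>3$; in particular the desired inequality fails strictly on $\mathcal{N}$. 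Both bounds are asymptotically saturated when $v$ approximates a constant function on $\Omega$ (H\"{o}lder near-equality) while $\|\nabla v\|_2$ is large (so the $b$-quartic term dominates in $J$). This suggests building $u_M$ as a small perturbation, into $\mathcal{N}_-$, of the Nehari projection of a smooth approximation to the indicator of $\Omega$.

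Concretely, for $\eta > 0$ I would pick $v_\eta \in H_0^1(\Omega)$ with $0 \leq v_\eta \leq 1$, $v_\eta \equiv 1$ on $\Omega_\eta := \{x \in \Omega : \operatorname{dist}(x,\partial\Omega) > \eta\}$, and $|\nabla v_\eta| \leq C_0\eta^{-1}$ supported in the collar $\Omega \setminus \Omega_\eta$. Then as $\eta \to 0^+$, $\|v_\eta\|_2^2, \|v_\eta\|_{q+1}^{q+1} \to |\Omega|$ while $\|\nabla v_\eta\|_2^2$ grows like $\eta^{-1}$. Let $\lambda_\eta := \lambda^*(v_\eta)$ be the Nehari scaling furnished by Lemma \ref{le2.1}, so that $w_\eta := \lambda_\eta v_\eta \in \mathcal{N}$. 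Dropping the subdominant $a$-term in $I(w_\eta) = 0$ yields $\lambda_\eta^{q-3} \sim b\|\nabla v_\eta\|_2^4/\|v_\eta\|_{q+1}^{q+1} \to \infty$, and the identity $J(w)=\frac{a(q-1)}{2(q+1)}\|\nabla w\|_2^2+\frac{b(q-3)}{4(q+1)}\|\nabla w\|_2^4$ valid on $\mathcal{N}$ then forces $J(w_\eta) \to +\infty$. Factorizing
\begin{equation*}
\frac{\|w_\eta\|_2^{q+1}}{J(w_\eta)} = \frac{\|w_\eta\|_2^{q+1}}{|\Omega|^{(q-1)/2}\|w_\eta\|_{q+1}^{q+1}} \cdot \frac{|\Omega|^{(q-1)/2}\|w_\eta\|_{q+1}^{q+1}}{J(w_\eta)},
\end{equation*}
the first factor tends to $1$ (as $v_\eta$ approaches a constant) while the second tends to $C_*$ (quartic dominance as $\|\nabla w_\eta\|_2 \to \infty$), both strictly from below. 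Hence $\|w_\eta\|_2^{q+1}/J(w_\eta) \nearrow C_*$ as $\eta \to 0^+$.

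Given $M > d$, I would then pick $\eta > 0$ small enough that $J(w_\eta) \geq 2M$, and set $u_M := \mu w_\eta$ for some $\mu > 1$ to be chosen. By Lemma \ref{le2.1}(iii), $u_M \in \mathcal{N}_-$. Since $\tau \mapsto J(\tau w_\eta)$ attains its maximum at $\tau = 1$ with $I(w_\eta) = 0$ there, a second-order expansion yields $J(u_M) = J(w_\eta) - O((\mu - 1)^2)$, so $J(u_M) \geq M$ for all $\mu$ sufficiently close to $1$. Meanwhile
\begin{equation*}
\frac{\|u_M\|_2^{q+1}}{J(u_M)} = \mu^{q+1} \cdot \frac{\|w_\eta\|_2^{q+1}}{J(\mu w_\eta)}
\end{equation*}
is inflated by $\mu^{q+1} = 1 + (q+1)(\mu - 1) + O((\mu - 1)^2)$; choosing $\mu - 1 > 0$ proportional (with a sufficiently large multiplier) to the positive defect $C_* - \|w_\eta\|_2^{q+1}/J(w_\eta)$ pushes the ratio strictly above $C_*$, while the quadratic loss in $J$ is a lower-order correction. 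Proposition \ref{pro5.1} then delivers $u_M \in \mathcal{N}_- \cap \mathcal{B}$, as desired.

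The delicate step is the quantitative asymptotic analysis of $w_\eta$: verifying that $\|w_\eta\|_2^{q+1}/J(w_\eta)$ approaches $C_*$ strictly from below (so that the perturbation $\mu > 1$ is necessary), and that the first-order gain from $\mu^{q+1}$ outweighs the second-order loss in $J$ while preserving $J(u_M) \geq M$.
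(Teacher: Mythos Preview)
Your approach is workable but takes a substantially different and more laborious route than the paper's. The paper exploits a disjoint--support decomposition: choose two disjoint open subdomains $\Omega_1,\Omega_2\subset\Omega$, a nontrivial $v\in H_0^1(\Omega_1)$, and $\alpha$ large enough that $J(\alpha v)\leq 0$ and $\|\alpha v\|_2^{q+1}>C_*M$ (both possible since $q>3$); then pick $w\in H_0^1(\Omega_2)$ with $J(w)=M-J(\alpha v)$, extend both by zero, and set $u_M=\alpha v+w$. Disjointness gives $\|u_M\|_2^{q+1}\geq\|\alpha v\|_2^{q+1}>C_*M$ and (the paper asserts) $J(u_M)=M$, after which Proposition~\ref{pro5.1} applies directly. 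This sidesteps the entire asymptotic analysis of $w_\eta$ and the balancing of $\mu-1$ against the defect $C_*-\|w_\eta\|_2^{q+1}/J(w_\eta)$ that you correctly identify as the delicate step. On the other hand, your construction has one structural advantage in the Kirchhoff setting: because the nonlocal quartic $\tfrac{b}{4}\|\nabla u\|_2^4$ is \emph{not} additive under disjoint supports (one actually gets $J(u_M)=J(\alpha v)+J(w)+\tfrac{b}{2}\|\nabla(\alpha v)\|_2^2\|\nabla w\|_2^2>M$), the paper's additivity claim needs a patch, whereas your scaling--perturbation scheme treats the nonlocal term cleanly throughout.
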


\begin{proof}
For any $M>d$, let $\Omega_1$ and $\Omega_2$ be two arbitrary disjoint open subdomains of $\Omega$,
and assume that $v\in H_0^1(\Omega_1)$ is an arbitrary nontrivial function. Since $q>3$,
we can choose $\alpha>0$ large enough such that $J(\alpha v)\leq0$ and $\|\alpha v\|_2^{q+1}>|\Omega|^{\frac{q-1}{2}}\dfrac{4(q+1)}{q-3}M$.
Fix $\alpha$ and choose a function $w\in H_0^1(\Omega_2)$ such that $J(w)+J(\alpha v)=M$.
Extend $v$ and $w$ to be $0$ in $\Omega\setminus\Omega_1$ and $\Omega\setminus\Omega_2$, respectively,
and set $u_M=\alpha v+w$. Then $J(u_M)=J(\alpha v)+J(w)=M$ and it holds that $\|u_M\|_2^{q+1}\geq\|\alpha v\|_2^{q+1}>|\Omega|^{\frac{q-1}{2}}\dfrac{4(q+1)}{q-3}J(u_M)$.
By Proposition \ref{pro5.1} it is seen that $u_M\in \mathcal{N}_-\cap\mathcal{B}$. This completes the proof.
\end{proof}

{\bf Acknowledgement}\\
The authors would like to express their sincere gratitude to Professor Wenjie Gao for his enthusiastic
guidance and constant encouragement.


\begin{thebibliography}{xx}
\bibitem{Chipot2003}
M. Chipot, V. Valente, G. V. Caffarelli, Remarks on a nonlocal problems involving the Dirichlet energy, Rend. Sem. Math. Univ. Padova, {\bf 110}(2003), 199-220.

\bibitem{Shibata1994}
P. D'Ancona, Y. Shibata, on global solvability of non-linear viscoelastic equation in the analytic category, Math. Methods Appl. Sci., {\bf 17}(1994), 477-489.

\bibitem{Spagnolo1992}
P. D'Ancona, S. Spagnolo, Global solvability for the degenerate Kirchhoff equation with real analytic data, Invent. Math., {\bf 108}(1992), 247-262.

\bibitem{Gazzola}
F. Gazzola, T. Weth, Finite time blow up and global solutions for semilinear parabolic equations with initial data at
high energy level, Differential Integral Equations, {\bf 18}(2005),\ 961-990.

\bibitem{Ghisi2008}
M. Ghisi, M. Gobbino, Hyperbolic-parabolic singular perturbation for middly degenerate Kirchhoff equations: time-decay estimates, J. Differ. Equ., {\bf 245}(2008), 2979-3007.

\bibitem{Ikehata1996}
R. Ikehata, Some remarks on the wave equations with nonlinear dampling and source terms, Nonlinear Anal., {\bf27} (1996), 1165-1175.

\bibitem{kirchhoff}
G.~Kirchhoff,~Mechanik,~Teubner,~Leipzig,~1883.

\bibitem{Levine1973}
H. A. Levine, Some nonexistence and instability theorems for solutions of formally parabolic equation of the form $Pu_t=-Au+\mathcal{F}u$,
Arch. Rati. Mech. Anal., {\bf51}(1973), 371-386.

\bibitem{Lions1969}
J. L. Lions, Quelques methods de resolution des problem aux limits nonlinears, Dunod Paris, (1969).

\bibitem{lions1977}
J.~L.~Lions,~On some questions in boundary value problems of mathematical physics, in: Contemporary
developments in Continuum Mechanics and Partial Differential Equations (Proceedings of International
Symposium, Inst. Mat., Univ. Fed. Rio de Janeiro, Rio de Janeiro, 1977), North-Holland Mathematical
Studies, vol. 30, North-Holland, Amsterdam, 1978, pp. 284-346.

\bibitem{YCLiu2003}
Y. C. Liu, On potential wells and vacuum isolating of solutions for semilinear wave equations, J. Differ. Equ., {\bf192(1)}(2003), 155-169.

\bibitem{YCLiu2006}
Y. C. Liu, J. S. Zhao, On potential wells and applications to semilinear hyperbolic equations and parabolic equations, Nonlinear Anal.,
{\bf64}(2006), 2665-2687.

\bibitem{Nishihara1999}
K. Nishihara, On a global solution of some quasilinear hyperbolic equation, Tokyo J. Math., {\bf 7}(1984), 437-459.

\bibitem{Payne1975}
L. E. Payne, D. H. Sattinger, Sadle points and instability of nonlinear hyperbolic equtions, Israel J. Math., {\bf22}(1975), 273-303.

\bibitem{Qu2016}
C. Y. Qu, W. S. Zhou, Blow-up and extinction for a thin-film equation with initial-boundary value conditions, J. Math. Anal. Appl., {\bf436(2)}(2016), 796-809.

\bibitem{Sattinger1968}
D. H. Sattinger, On global solution of nonlinear hyperbolic equations, Arch. Rati. Mech. Anal., {\bf30(2)}(1968), 148-172.

\bibitem{Tsutsumi2003}
M. Tsutsumi, Existence and nonexistence of global solutions for nonlinear parabolic equations, Publ. RTMS, {\bf73(8)} (1972), 211-229.

\bibitem{RZXu2009}
R. Z. Xu, Asymptotic behavior and blow up of solutions for semilinear parabolic equations at critical ennergy level, Math. Comput. Simulation,
{\bf80}(2009), 808-813.

\bibitem{RZXu2013}
R. Z. Xu, J. Su, Global existence and finite time blow-up for a class of semilinear pseudo-parabolic equations, J. Funct. Anal., {\bf264}(2013), 2732-2763.

\end{thebibliography}
\end{document}